

\documentclass[12pt]{amsart}

\usepackage{amsmath,amsthm,amssymb}
\usepackage{bbm}
\usepackage{mathrsfs}
\usepackage{enumerate}
\usepackage{graphicx}
\usepackage{caption} 
\usepackage{subcaption} 
\usepackage{thmtools}
\usepackage{thm-restate}
\usepackage{hyperref}


\newcommand{\del}{\partial }

\newtheorem{thm}{Theorem}
\newtheorem{conj}[thm]{Conjecture}
\newtheorem{prop}[thm]{Proposition}

\newtheorem{corr}[thm]{Corollary}
\newtheorem{lemma}[thm]{Lemma}
\theoremstyle{definition}

\newtheorem{defn}[thm]{Definition}
\newtheorem{question}[thm]{Question}
\theoremstyle{remark}

\begin{document}

\title{3--manifolds lying in trisected 4--manifolds}
\author{Dale Koenig}
\date{}

\begin{abstract}
The spine of a trisected 4--manifold is a singular 3-dimensional set from which the trisection itself can be reconstructed.  3--manifolds embedded in the trisected 4--manifold can often be isotoped to lie almost or entirely in the spine of the trisection.  We define this notion and show that in fact every 3--manifold can be embedded to lie almost in the spine of a minimal genus trisection of some connect sum of $S^2 \tilde \times S^2$s.  This mirrors the known fact that every 3--manifold can be smoothly embedded in a connect sum of $S^2 \times S^2$s.  Our methods additionally give an upper bound for how many copies of $S^2 \tilde \times S^2$ based on a distance calculated in an appropriately defined graph.  For the special case of lens spaces we analyze more closely and obtain more explicit bounds.
\end{abstract}

\maketitle

\section{introduction}

The theory of trisections of 4--manifolds was introduced by Gay and Kirby in 2012 \cite{GayKirby1}, and has since become a fast developing area of new research.  Trisections provide an analogue of Heegaard splittings in the theory of smooth 4--manifolds, allowing these 4--manifolds to be described and analyzed using only curves on a surface.  Meier and Zupan developed a theory of bridge trisections to analyze embedded surfaces in a trisected 4--manifold \cite{MeierZupan3,MeierZupan2}.  In this paper the author hopes to begin this program for embedded 3--manifolds. 

We present trisections of 4--manifolds in which certain 3--manifolds lie in a particularly nice way with respect to the trisection.    In these embeddings, the 3--manifold will lie almost entirely within the \emph{spine}, a singular 3--dimensional subset that completely determines the 4--manifold and its respective trisection.  We say that such 3--manifolds lie \emph{almost in the spine} of the 4--manifold.  These 3--manifolds can often be found directly from a nice enough diagram of the trisection.

Our main results center around 3--manifolds lying almost in the spine of minimal genus trisections of connect sums of $S^2 \times S^2$ and $S^2 \tilde \times S^2$.  It is well known that every closed 3--manifold embeds in some connect sum of $S^2 \times S^2$s.  We target these result from the point of few of trisections, requiring the 3--manifolds to lie almost in the spine of the 4--manifold.  We show that every lens space can be embedded to lie almost in the spine of a connect sum of only $S^2 \times S^2$s, and in the special case of lens spaces of the form $L(n,1)$ we show we can embed in a single copy of either $S^2 \times S^2$ or $S^2 \tilde \times S^2$.  More generally, we show that we can embed any orientable 3--manifold $M$ into $\#^n S^2 \tilde \times S^2$ where $n$ is related to the distance between the two cut systems determining a Heegaard splitting of $M$ in an appropriately defined graph.

These results naturally lead to connections with the more general theory of smooth embeddings of 3--manifolds in 4--manifolds. Aceto, Golla, and Larson defined the \emph{embedding number} $\varepsilon(M)$ of a 3--manifold $M$ to be the smallest value of $n$ for which $M$ embeds smoothly in $\#^n S^2 \times S^2$ \cite{AcetoGollaLarson1}.  The specifically focus on embeddings inheriting a spin structure from the 4--manifold.  Inspired by this, we define the \emph{spine embedding number} $\varepsilon_S(M)$ and \emph{twisted spine embedding number} $\tilde \varepsilon_S(M)$ of $M$ by restricting to embeddings in which $M$ lies almost in the spine of a trisection.  The results of this paper show $\tilde \varepsilon_S(M) < \infty$ for every closed orientable 3--manifold $M$, and give explicit upper bounds for both $\varepsilon_S(M)$ and $\tilde \varepsilon_S(M)$ when $M$ is a lens space.

\section{preliminaries and notation}

All 4--manifolds will be taken to be smooth, closed, and orientable unless specified otherwise.   The 3--manifolds are closed, orientable, and piecewise smooth.  We will use $X$ to denote a four dimensional manifold and $M$ to denote a three dimensional manifold which we will embed inside $X$.  A \emph{trisection} of $X$ is a decomposition $X = X_1 \cup X_2 \cup X_3$ such that each $X_i$ is a four dimensional handlebody and each pairwise intersection $Y_{ij} = X_i \cap X_j$ is a three dimensional handlebody \cite{GayKirby1}.  The triple intersection $\Sigma = X_1 \cap X_2 \cap X_3$ is a closed surface which is the boundary of each of the three 3 dimensional handlebodies $Y_{ij}$.  The union $Y = Y_{12} \cup Y_{13} \cup Y_{23}$ is called the \emph{spine} of the trisection.  If $\Sigma$ is a genus $g$ surface and the $X_i$ are genus $k_i$ handlebodies then we say that the trisection is a $(g;k_1,k_2,k_3)$ -- trisection.

A \emph{cut system} on the surface $\Sigma$ is a set of $g$ curves $C_1 ,\cdots, C_g$ such that $\Sigma - \left( \bigcup_i C_i \right)$ is a sphere with $2g$ punctures.  A cut system determines a three dimensional handlebody with $\Sigma$ as its boundary, obtained by attaching disks to $C_i \times \{0\} \subset \Sigma \times [0,1]$ for each $i$, and then filling in the resulting $S^2$ boundary component with a 3--ball.  The spine of the trisection can be reconstructed from the surface $\Sigma$ and three marked cut systems on $\Sigma$, the three cut systems determining how $\Sigma$ bounds the three handlebodies $Y_{ij}$.  We will sometimes refer to the three $Y_{ij}$ as $Y_\alpha, Y_\beta$, and $Y_\gamma$,  and we will refer to the cut systems determining them as the $\alpha$ curves, $\beta$ curves, and $\gamma$ curves respectively.  Thus $Y_{\alpha}$ is determined by a set of curves $\alpha_1,\cdots,\alpha_g$ on $\Sigma$, $Y_\beta$ by $\beta_1,\cdots,\beta_g$, and $Y_\gamma$ by $\gamma_1,\cdots,\gamma_g$.  Note that the cut systems are restricted in that any pair of cut systems must give a Heegaard diagram for a connect sum of $S^1 \times S^2$s. 

A genus $g$ Heegaard diagram for $\#^k S^1 \times S^2$ given by curves $\alpha_1,\cdots,\alpha_g$ and $\beta_1,\cdots,\beta_g$ is called \emph{standard} if, after possibly reindexing, $\alpha_i$ and $\beta_i$ are parallel for $1 \le i \le  k$ and $\#|\alpha_i \cap \beta_j| = \delta_{ij}$ otherwise.  This extends to $k=0$ so $M=S^3$, in which $\#|\alpha_i \cap \beta_j| = \delta_{ij}$ for all $i,j$.  A Heegaard splitting of such a manifold is called \emph{standard} if it can be given by such a diagram.  Note that a standard Heegaard splitting will have many nonstandard diagrams, but every diagram can be made standard by handle slides and isotopy.  Heegaard splittings $\#^k S^1 \times S^2$s are all standard, so there is a unique Heegaard splitting of genus $g$ for all $g \ge k$ \cite{waldhausen1}.  

In this paper we will investigate embeddings of the 3--manifold $M$ into $X$ such that $M$ lies ``nicely" with respect to the trisection of $X$.  We will henceforth assume that $X$ comes equipped with a trisection.  To simplify the proceeding discussion we will first make a definition.

\begin{defn}
A \emph{subhandlebody} of a three dimensional handlebody is a connected component resulting from making compressions along some set of disjoint nonparallel essential disks. 
\end{defn}

Since the disk compressions are chosen to be nonparallel, a subhandlebody can only be a ball if the original handlebody was.  If $H$ is a subhandlebody of $Y_{ij}$, then $\del H$ consists of the union of some connected subsurface of $\Sigma = \del Y_{ij}$ with a collection of disjoint nonparallel essential disks $D_1, \cdots, D_m$ in $Y_{ij}$.  $H$ is topologically still a handlebody, and we can diagrammatically view $H$ by marking in $\Sigma$ the curves $\del D_i$ and an additional nonseparating collection of $g(H)$ curves bounding disks in $H$.

\begin{defn}
Suppose $M$ is a 3--manifold with $S^2$ boundary components.  $M$ lies \emph{in the spine} of the trisection of $X$ if $M \subset Y$ and each $M \cap Y_{ij}$ is a union of subhandlebodies.
\end{defn}

\begin{defn}
Suppose $M$ is a closed orientable 3--manifold.  $M$ lies \emph{almost in the spine} if each $M \cap Y_{ij}$ is a union of handlebodies and each $M \cap \mathring X_i$ is a union of 3-balls.
\end{defn}

We will often abuse terminology and say that $M$ lies (almost) in the spine of $X$ to mean that $M$ lies (almost) in the spine of the equipped trisection of $X$.  Note that since the spine is not a smooth object, 3--manifolds in or almost in the spine will not be embedded smoothly.  However, we can give the 4--manifold a PL structure compatible with its smooth structure in which the spine and embedded 3--manifolds are PL.

Given a 3--manifold $M$ with $S^2$ boundary components lying in the spine of $X$, we can often extend the embedding to that of a closed 3--manifold lying almost in the spine of $X$.  Let $M$ denote such a 3--manifold.  For each $S^2$ boundary component $A \subset \del M$, $A - A \cap \Sigma$ consists of essential disks in the $Y_{ij}$.  We can color these disks by which $Y_{ij}$ they lie in.
\begin{prop}
If $A$ is such a boundary component of $M$  and if at most two colors appear on $A$ (that is, if some color does not appear) then $A$ bounds a 3-ball lying in one of the $X_i$.  If every such $S^2$ boundary component bounds a 3-ball in this way, then attaching these 3-balls extends the embedding of $M$ to an embedding of $\tilde M$ lying almost in the spine of $X$.
\end{prop}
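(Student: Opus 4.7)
The first assertion contains the substantive content; the second is a routine verification once the balls are in hand, so I begin with the first and then assemble. My plan is to place $A$ inside some $\partial X_i$ using the color hypothesis, produce the bounding 3-ball using the handlebody structure of $X_i$, and then cap off $M$.

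Suppose without loss of generality that the absent color is $Y_{23}$. Then every disk piece of $A-A\cap\Sigma$ lies in $Y_{12}\cup Y_{13}$, and $A\cap\Sigma\subset\Sigma\subset Y_{12}\cap Y_{13}$, so $A\subset Y_{12}\cup Y_{13}=\partial X_1$. Thus $A$ is a 2-sphere in the closed 3-manifold $\partial X_1\cong\#^{k_1}S^1\times S^2$, which is the boundary of the 4-dimensional handlebody $X_1\cong\natural^{k_1}S^1\times B^3$. Appealing to Laudenbach--Po{\'e}naru-style results for 2-sphere systems in $\#^{k}S^1\times S^2$, every such sphere bounds an embedded 3-disk in the 4-handlebody. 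This produces a 3-ball $B\subset X_1$ with $\partial B=A$, which can be arranged (by isotopy of the interior) to satisfy $\mathring B\subset\mathring X_1$ and $\mathring B\cap M=\emptyset$.

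For the second assertion, given a ball $B_\alpha\subset X_{i_\alpha}$ for each sphere boundary component $A_\alpha\subset\partial M$ with $\mathring B_\alpha\subset\mathring X_{i_\alpha}$, I define $\tilde M:=M\cup\bigcup_\alpha B_\alpha$. This is closed since every sphere boundary of $M$ is capped. Moreover $\tilde M\cap Y_{ij}=M\cap Y_{ij}$ because each $\mathring B_\alpha$ lies in the interior of some $X_{i_\alpha}$, disjoint from the spine, so this intersection remains a union of subhandlebodies and hence of handlebodies. Similarly $\tilde M\cap\mathring X_i$ is the disjoint union of those $B_\alpha$ with $i_\alpha=i$, each a 3-ball. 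Thus $\tilde M$ lies almost in the spine.

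The main obstacle is the 4-manifold-theoretic fact that any 2-sphere in the boundary of a 4-dimensional handlebody bounds an embedded 3-ball in the handlebody. Rather than invoke it as a black box, I would try to produce the ball directly from the structure of $A$: write $F:=A\cap\Sigma$, which is planar since each component embeds in the sphere $A$, and consider the pieces of $Y_{12}$ (resp.\ $Y_{13}$) obtained by cutting along the disk pieces of $A$ there. The pieces on the $F$-side should glue along $F$ to form a 3-dimensional submanifold of $\partial X_1$ with boundary $A$, and a van Kampen argument leveraging that the pieces are handlebodies and $F$ is planar should identify this submanifold as a 3-ball. Pushing off $\partial X_1$ into $\mathring X_1$ then completes the construction.
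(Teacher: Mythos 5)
Your proof takes essentially the same route as the paper: localize $A$ to $\partial X_i$ via the color count, invoke Laudenbach--Po\'enaru to produce the bounding $3$-ball in $X_i$, and then check that the capped-off $\tilde M$ satisfies the definition of lying almost in the spine. The paper's phrasing of the Laudenbach--Po\'enaru input is slightly more explicit (apply a boundary diffeomorphism to standardize $A$, then extend across the $4$-handlebody), and it separately remarks that multiple sphere components bound \emph{disjoint} balls, which your verification paragraph should also note; your sketched direct construction of the ball from the pieces of $A\cap\Sigma$ is extra material not in the paper and is left incomplete, so it does not substitute for the Laudenbach--Po\'enaru appeal.
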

\begin{proof}
This follows from Laudenbach and Po{\'e}naru's proof that a diffeomorphism on the boundary of a 4-dimensional handlebody extends to the interior \cite{LaudenbachPoenaru1}. Since at most two colors appear on $A$, $A$ lies in the boundary of some $X_i$.  $A$ then bounds a unique 3-ball in the interior of that $X_i$ up to isotopy, since we can apply a diffeomorphism of $\#^k S^1 \times S^2$ to take it to a 2-sphere that ``obviously" bounds a 3--ball, and this diffeomorphism extends to the interior of the handlebody.  \end{proof}

When there are multiple 2-sphere boundary components, they can be chosen to bound disjoint 3--balls.  Thus, if no $S^2$ boundary component has disks of all three colors on it, then the embedding of $M$ can be extended to an embedding of a closed manifold almost in the spine of $X$.

\section{Diagrams and simple cases}

We will henceforth assume that any boundary components that can be filled are filled, so there are punctures iff some $S^2$ boundary components of $M \cap Y$ have disks of all three colors.  In this case, $M$ is uniquely determined by the collection of subhandlebodies $M \cap Y_{ij}$.  We can then define a diagram for the pair $(X,M)$.

\begin{defn}
A  \emph{diagram for the pair $(X,M)$} is a trisection diagram of $X$ together with an additional diagram on $\Sigma$ in which the curves determining each subhandlebody component of $M \cap Y_{ij}$ are drawn in the appropriate color.
\end{defn}

We can draw all the subhandlebody diagrams on the same copy of $\Sigma$ since they are in different colors.  However we do need a separate copy of $\Sigma$ on which to specify the normal trisection diagram.  We let $M_\alpha = M \cap Y_\alpha$, $M_\beta = M \cap Y_\beta$, and $M_\gamma = M \cap Y_\gamma$ be the collections of subhandlebodies in the three $Y_{ij}$, so a diagram for $(X,M)$ consists of a normal trisection diagram together with another copy of $\Sigma$ on which $\alpha$ curves are given specifying $M_\alpha$, $\beta$ curves specifying $M_\beta$, and $\gamma$ curves specifying $M_\gamma$.

We now investigate when 3--manifolds can lie almost in the spine of a trisection for low genus trisections.  Note since the union of two of the 3-dimensional handlebodies in the spine gives a connect sum of $S^1 \times S^2$s by definition, we can often find uninteresting sums of $S^1 \times S^2$ lying in the spine.  We therefore only consider the case where $M$ is not such a connect sum.

\begin{prop}
If $M = L(n,1)$ then $M$ can be embedded almost in the spine of a genus $2$ trisection of $S^2 \times S^2$ or $S^2 \tilde \times S^2$ - the former if $n$ is even and the latter if $n$ is odd.
\label{embedding_n_1}
\end{prop}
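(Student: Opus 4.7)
The plan is to exhibit an explicit trisection diagram for the pair $(X, L(n,1))$ on a genus 2 surface $\Sigma$ and verify directly that the construction yields the desired embedding, with the parity of $n$ controlling the ambient 4-manifold.

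First, fix standard $(2;0,0,0)$-trisection diagrams of $S^2\times S^2$ and $S^2\tilde\times S^2$: both live on a genus 2 surface with three cut systems arranged so that any two give a genus 2 Heegaard diagram of $S^3$, and the two ambient manifolds are distinguished by the parity of a Dehn twist in one of the systems. Next, identify a separating simple closed curve $\delta$ on $\Sigma$ (the connect-sum separator) that bounds a compressing disk in each of $Y_\alpha, Y_\beta, Y_\gamma$; compressing along this disk in each handlebody isolates a solid-torus subhandlebody whose $\Sigma$-face lies on the resulting once-punctured torus $F\subset\Sigma$.

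Next, encode the genus 1 Heegaard splitting of $L(n,1)$ on $F$ by placing a meridian-type curve in one color, an $(n,1)$-slope curve in another color intersecting the meridian $n$ times, and a third compatibly chosen curve in the remaining color. These data determine three solid-torus subhandlebodies $M_\alpha, M_\beta, M_\gamma$, one in each $Y_{ij}$, whose gluing along $\Sigma$ (together with 3-balls in the $X_i$ supplied by the proposition of section 2) is the closed 3-manifold $L(n,1)$: the genus 1 Heegaard diagram obtained by reading off any two of the three color pairs is, by construction, the standard diagram for $L(n,1)$.

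The main obstacle is the parity correspondence between $n$ and the ambient 4-manifold. The $(n,1)$-slope curve contributes $n$ transverse intersections on $F$, and this count propagates through the trisection so that each unit change in $n$ effects a Dehn twist that toggles the ambient $X$ between $S^2\times S^2$ and $S^2\tilde\times S^2$. Verifying this parity correspondence reduces either to a Kirby-calculus computation on the handle decomposition induced by the trisection, or to a direct computation of the intersection form of $X$ modulo 2 from the diagram; this is the technical heart of the argument and the place where even and odd $n$ are separated.
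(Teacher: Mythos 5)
The construction breaks at its first step: you ask for a separating curve $\delta$ on $\Sigma$ that bounds a compressing disk in every one of $Y_\alpha$, $Y_\beta$, $Y_\gamma$. Such a $\delta$ is a \emph{reducing} curve for the trisection, and its existence forces the genus $2$ trisection to split as a connected sum of two genus $1$ trisections. By the Meier--Zupan classification of $(2;0)$ trisections, the genus $2$ trisection of $S^2\tilde\times S^2$ does reduce in this way (it is $\mathbb{CP}^2\#\overline{\mathbb{CP}^2}$), but the genus $2$ trisection of $S^2\times S^2$ is irreducible, since $S^2\times S^2$ is not a connected sum of genus-$1$-trisectable $4$-manifolds. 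So the curve you need does not exist precisely in the $n$ even case you are trying to cover.

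There is a second problem that would persist even if $\delta$ existed. If $M_\alpha,M_\beta,M_\gamma$ are the solid-torus sides of $Y_\alpha,Y_\beta,Y_\gamma$ cut along $\delta$, each with $\Sigma$-face the once-punctured torus $F$, then near an interior point of $F$ the union $M_\alpha\cup M_\beta\cup M_\gamma$ looks locally like $F\times\{\text{three rays meeting at a point}\}$ --- the tripod singularity of the spine --- which is not a $3$-manifold. A piece of a $3$-manifold lying in the spine cannot meet all three $Y_{ij}$ along a common open subsurface of $\Sigma$; it may only share curves there. The paper's decomposition is therefore deliberately asymmetric: one of the pieces $M_\ast$ is the \emph{entire} genus $2$ handlebody $Y_\ast$ (so its $\Sigma$-face is all of $\Sigma$), the remaining two are solid tori whose $\Sigma$-faces are subsurfaces overlapping along annuli rather than coinciding, and a single $3$-ball outside the spine caps a two-colored boundary sphere. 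The trisection diagram encodes $n$ as the winding of one curve of the third cut system, and the ambient $4$-manifold is then identified by Kirby calculus, collapsing that cut system to a Hopf link with framings $0$ and $n$. The parity of $n$ does enter exactly there, as you anticipated, but the computation runs on a trisection that your reducing-curve setup cannot produce when $n$ is even.
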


\begin{proof}
Let $X$ be the trisection diagram shown in Figure \ref{simplestcase}, where the number of times the left blue curve loops around clockwise is determined by the integer parameter $n$.  We claim that this trisection demonstrates the desired properties.  This requires us first to show that $L(n,1)$ embeds almost in the spine, and second to show that we have indeed given a diagram for $S^2 \times S^2$ (if $n$ is even) or $S^2 \tilde \times S^2$ (if $n$ is odd).
\begin{figure}[h]
\centering
\includegraphics[height=1.5in]{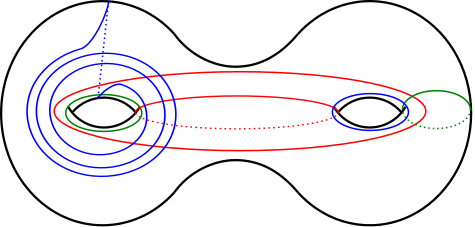}
\caption{A trisection diagram in which $L(n,1)$ embeds almost in the spine, where $n$ is the number of times the left blue curve loops around longitudinally.}
\label{simplestcase}
\end{figure}f

For the first statement, we find $L(n,1)$ broken up as shown in Figure \ref{simplestcasepieces}.  To see that this is indeed $L(n,1)$, note that if we treat the red curves as determining one genus 2 handlebody, and treat the left blue curve and the right green curve as determing a second genus 2 handlebody, then we have a genus 2 Heegaard diagram for $L(n,1)$.  Indeed, the union of the blue solid torus, the green solid torus, and the grey ball in Figure \ref{simplestcasepieces} gives the second genus 2 handlebody of the splitting.

\begin{figure}[h]
\centering
    \begin{subfigure}[b]{0.8\textwidth}
        \centering
        \includegraphics[height=1.5in]{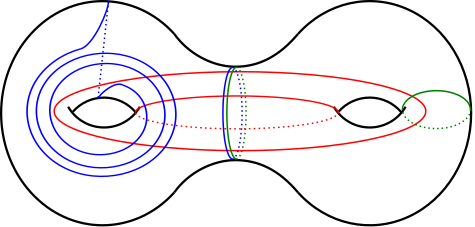}
        \caption{Curves determining $M_\alpha,M_\beta$ and $M_\gamma$.}
    \end{subfigure}%

    \begin{subfigure}[b]{0.8\textwidth}
        \centering
        \includegraphics[height=2in]{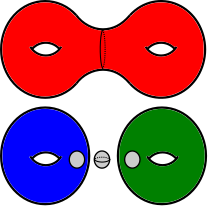}
        \caption{ $M_\alpha, M_\beta, M_\gamma$, and the ball lying outside the spine.}
    \end{subfigure}
\caption{The curves drawn in (A) determine three handlebodies as shown in (B).  The union of these three handlebodies is a punctured copy of $L(n,1)$ which can be filled in by a ball lying outside the spine.}
\label{simplestcasepieces}
\end{figure}
Now we identify $X$ from the trisection diagram.  In fact, this can be done simply by checking that we have indeed given a valid balanced trisection diagram, calculating the intersection form, and applying the classification of genus $2$ trisections by Meier and Zupan \cite{MeierZupan1}.  However, we will show it directly using Kirby calculus.  Think of the two blue curves as lying in the copy of $S^3$ determined by the Heegaard splitting into the green and red handlebodies.  We think of the surface as two punctured tori glued along the central curve, with the red handlebody being the copy of $\{\mathrm{punctured~torus}\} \times I$ connecting them.  The right blue curve then lies on the inner punctured torus as a longitude, and the outer $(n,1)$ curve lies as an $(n,1)$ curve on the outer punctured torus.  The framings on the blue curves are then $0$ and $n$ respectively, coming from the surface.  We then forget the surface, resulting in the Kirby diagram shown in Figure \ref{simplestcasekirby}.  This is a Hopf link with framings $0$ and $n$, and this diagram is known to correspond to $S^2 \times S^2$ when $n$ is even and $S^2 \tilde \times S^2$ when $n$ is odd (see page 144 of \cite{GompfStipsicz1}).  Note that one of the two curves will be reflected from how the blue curve looked on the original surface.  In the figure we have chosen to reflect the $(n,1)$ curve, as this closer matches the construction used later.  The choice of curve to reflect corresponds to an orientation reversing diffeomorphism of the copy of $S^3$ containing the red and green handlebody.

\begin{figure}[h]
\centering
\includegraphics[height=2.5in]{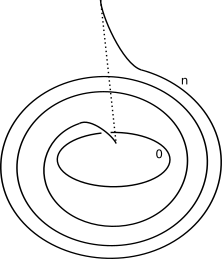}
\caption{A Kirby diagram coming from the trisection in which $L(n,1)$ embeds almost in the spine.}
\label{simplestcasekirby}
\end{figure}
\end{proof}

We conjecture that this construction is the only interesting possibility when $g=2$. 
\begin{conj}
If a 3--manifold $M$ that is not $\#^k S^1 \times S^2$ can be embedded to almost lie in the spine of a genus 2 trisection of a 4--manifold $X$, then $M$ is a $L(n,1)$ lens space and $X$ is $S^2 \times S^2$ or $S^2 \tilde \times S^2$.  Moreover, $X$ is the former if $n$ is even and the latter if $n$ is odd.
\end{conj}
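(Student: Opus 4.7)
\emph{Proof plan.} The plan is to combine a finite case analysis of admissible subhandlebody configurations with Meier and Zupan's classification of genus $2$ trisections \cite{MeierZupan1}.

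First I would set up bookkeeping. Each subhandlebody of a genus $2$ handlebody is either a $3$--ball, a solid torus, or the full handlebody, so each collection $M_\alpha, M_\beta, M_\gamma$ is a disjoint union of pieces of these three types. Since the spine $Y$ is not itself a $3$--manifold, no two of the three $M_i$ can both equal the full $Y_i$: that would force $M = \partial X_k$ for the remaining sector, and $\partial X_k$ is a connect sum of $S^1 \times S^2$s, contradicting the hypothesis on $M$. Every $S^2$ boundary component of $M \cap Y$ must also carry disks in at most two of the three colors so that it is capped off by a ball in one of the $X_i$. These constraints cut the list of possible triples $(M_\alpha, M_\beta, M_\gamma)$ down to a short explicit list.

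Next I would go through this list and, for each admissible configuration, reconstruct $M$ from the diagram and determine its homeomorphism type. The key ingredient is that any two of the trisection cut systems on $\Sigma$ form a Heegaard diagram of some $\#^k S^1 \times S^2$, and the cut systems defining the subhandlebodies $M_i$ are obtained from these by compression and restriction. For each configuration I expect to show either that (a) $M$ is itself a connect sum of $S^1 \times S^2$s, excluded by hypothesis, or that (b) the gluing data realises $M$ as a lens space $L(n,1)$ with a genus $1$ or stabilized genus $2$ Heegaard splitting. The main obstacle will be systematically ruling out the remaining closed Heegaard genus $2$ manifolds---lens spaces $L(n,q)$ with $q \not\equiv \pm 1 \pmod n$, small Seifert fibered spaces, and so on---by using the rigid intersection conditions that the three trisection cut systems impose on $\Sigma$ to force the gluing coefficient to be $\pm 1$.

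Once $M \cong L(n,1)$ is established, the identification of $X$ follows from the Meier--Zupan classification together with a Kirby calculus reading of the trisection diagram as in the proof of Proposition \ref{embedding_n_1}. The classification restricts $X$ to a small collection of $4$--manifolds built from $S^4$, $\mathbb{CP}^2$, $\overline{\mathbb{CP}^2}$, $S^1 \times S^3$, $S^2 \times S^2$, and $S^2 \tilde \times S^2$, and an intersection form computation using the explicit subhandlebody diagram for $L(n,1)$ should pin $X$ down as $S^2 \times S^2$ when $n$ is even and $S^2 \tilde \times S^2$ when $n$ is odd, with the parity being detected by whether $X$ is spin.
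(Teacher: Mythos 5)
The statement you set out to prove is posed in the paper only as a \emph{conjecture}; the author explicitly offers no proof and labels it an open question (``We conjecture that this construction is the only interesting possibility when $g=2$''). There is therefore no proof in the paper to compare your argument against.

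Read on its own terms, your proposal is a plausible roadmap rather than a proof, and the gap sits exactly where you flag it: ``the main obstacle will be systematically ruling out the remaining closed Heegaard genus $2$ manifolds.'' Your finite case analysis classifies triples $(M_\alpha, M_\beta, M_\gamma)$ only by the topological types of the subhandlebody pieces (ball, solid torus, full genus $2$ handlebody), but such a type does not determine $M$: a solid torus subhandlebody depends on the isotopy class of the compressing disk that cuts it out, so each ``case'' on your list contains infinitely many genuinely different configurations. You would need a concrete argument, beyond the observation that pairs of trisection cut systems give standard $\#^k S^1 \times S^2$ diagrams, to pin down those configurations tightly enough to exclude $L(p,q)$ with $q \not\equiv \pm 1 \pmod{p}$, small Seifert fibered spaces, and hyperbolic manifolds of Heegaard genus $2$. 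You also implicitly assume $M$ has Heegaard genus at most $2$; this is not obvious from $M$ lying almost in the spine of a genus $2$ trisection and would itself require justification. Until those steps are filled in, what you have is an outline of a strategy, and the conjecture remains open.
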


We will conclude this section by constructing an example of a punctured $M$ that lies in the spine of a trisection but cannot be extended to an embedding of a closed manifold lying almost in the trisection.  Consider the trisection shown in Figure \ref{puncturedthinga}.  This is a spun $L(3,2)$ as constructed in \cite{meier1}.  The diffeomorphism type is that of $\mathcal{S}_3$, one of Pao's manifolds \cite{Pao1}.  The diagram in Figure \ref{puncturedthingb} shows a once punctured copy of $L(3,2)$ lying in the spine.  The $S^2$ boundary has a disk of each color when colored by the intersections with the $Y_{ij}$, so it cannot be filled in by a ball lying in some $X_i$.
\begin{figure}[h]
\centering
    \begin{subfigure}[b]{0.8\textwidth}
        \centering
        \includegraphics[height=1.8in]{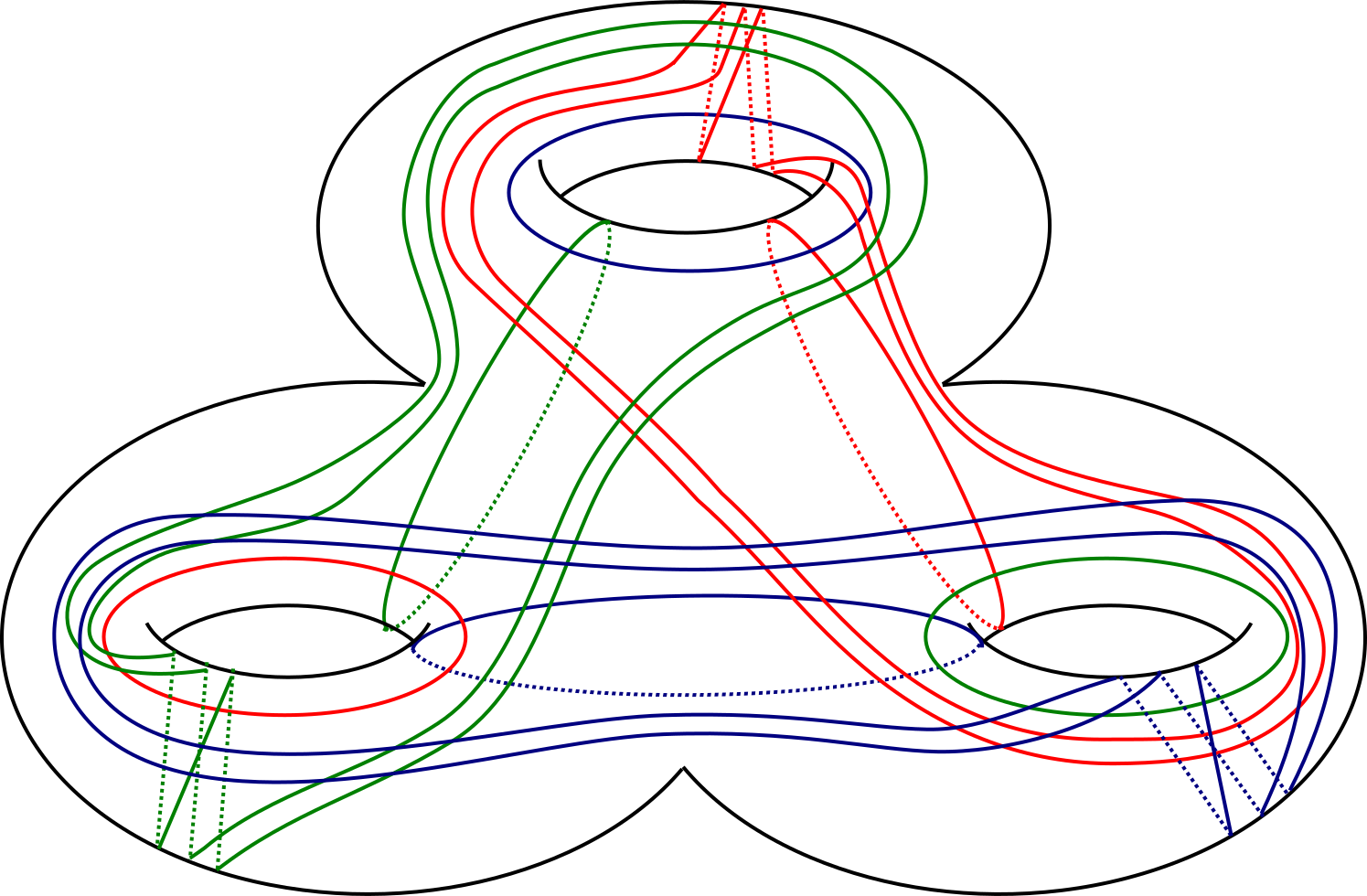}
        \caption{A trisection of a spun lens space.}
        \label{puncturedthinga}
    \end{subfigure}%

    \begin{subfigure}[b]{0.8\textwidth}
        \centering
        \includegraphics[height=1.8in]{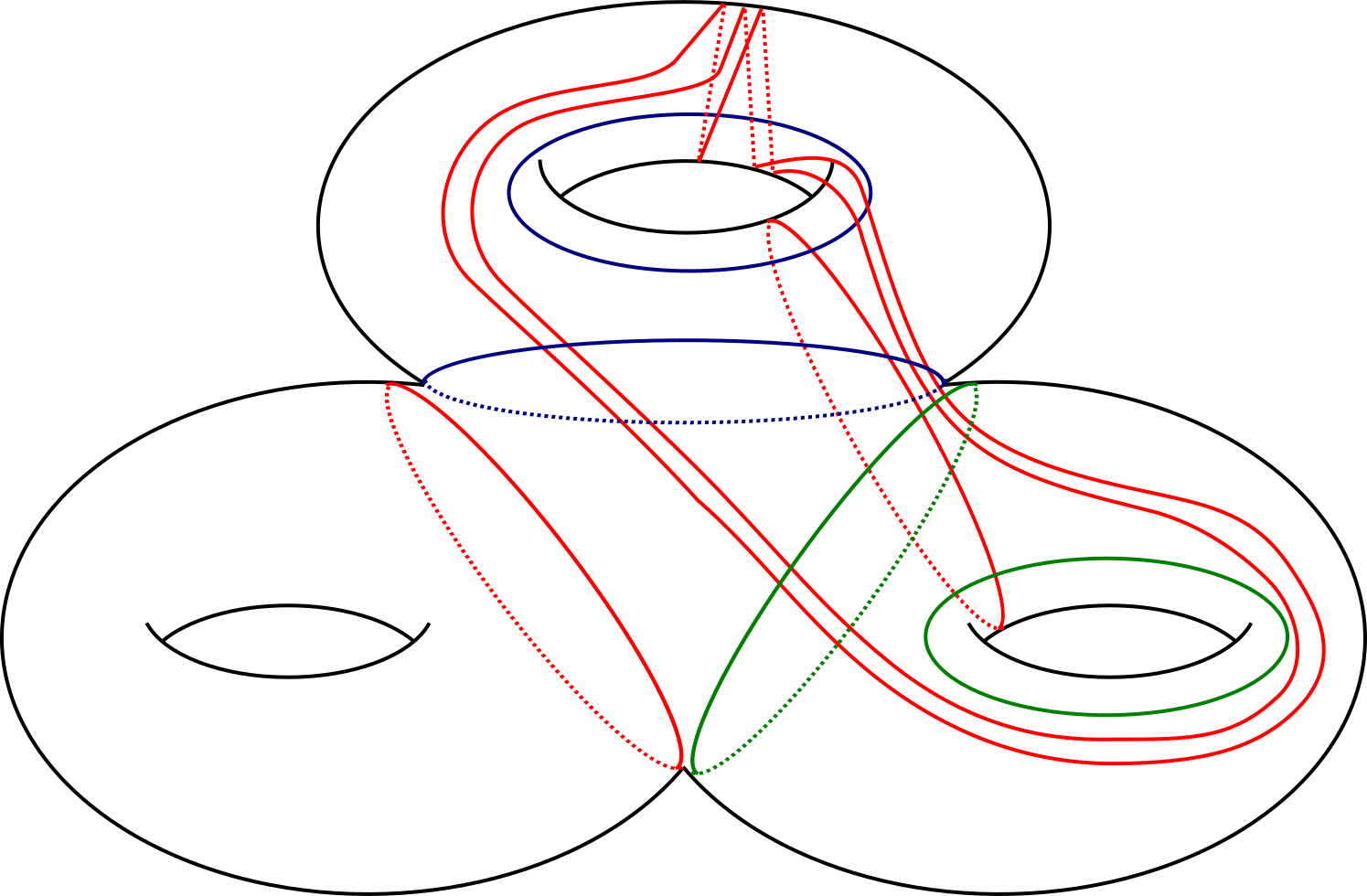}
        \caption{ Three subhandlebodies whose union is $L(3,2)$. }
                \label{puncturedthingb}
    \end{subfigure}
    
        \begin{subfigure}[b]{0.8\textwidth}
        \centering
        \includegraphics[height=1.6in]{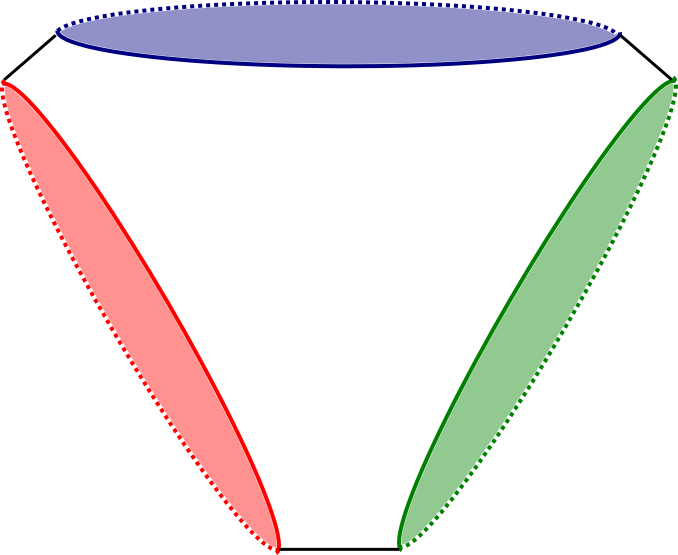}
        \caption{ The $S^2$ boundary component.}
    \end{subfigure}
\caption{A punctured copy of $L(3,2)$ lying in the spine of a spun $L(3,2)$.}
\label{puncturedthing}
\end{figure}

\section{The graphs $\mathcal{A}$ and $\mathcal{B}$}

We must now define two graphs $\mathcal{A}$ and $\mathcal{B}$ derived from a surface $S$.  For both graphs, the set of vertices is the set of cut systems for $S$ up to isotopy.  In $\mathcal{A}$, two vertices are connected by an edge if the two cut systems are dual.  That is, if the vertex $v_1$ represents a cut system $a_1,\cdots,a_g$ and the vertex $v_2$ represents a cut system $b_1,\cdots,b_g$ then $v_1$ and $v_2$ are connected by an edge iff we have that $\#|a_i \cap b_j| = \delta_{ij}$ for all $1 \le i,j \le g$, up to isotopy and reindexing.  Put more simply, two vertices are connected by an edge if the two cut systems give a standard Heegaard diagram for $S^3$.  In $\mathcal{B}$, we connect two distinct vertices by an edge if they represent cut systems such that each pair of curves is either dual or parallel.  In other words, we two vertices are connected by an edge if the two cut systems give a standard Heegaard diagram for $\#^k S^1 \times S^2$ for any $k$.  We first show that these graphs are connected, so that we can define metrics $d_\mathcal{A}$ and $d_\mathcal{B}$, on the set of cut systems of $S$.  

\begin{prop}
The graph $\mathcal{B}$ is connected.
\end{prop}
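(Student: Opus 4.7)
The plan is to exhibit, for any two cut systems $v$ and $v'$, a path in $\mathcal{B}$ connecting them, built from a very simple family of edges. Call an edge of $\mathcal{B}$ \emph{elementary} if, after reindexing, the two cut systems are $v = \{a_1,\ldots,a_{g-1},a\}$ and $v' = \{a_1,\ldots,a_{g-1},a'\}$ with $a$ and $a'$ transverse at a single point and each disjoint from $a_1,\ldots,a_{g-1}$. Realized with parallel copies of the shared curves, such a pair forms a standard Heegaard diagram for $\#^{g-1}S^1 \times S^2$ (taking $k=g-1$ in the paper's definition): the $g-1$ shared isotopy classes give $g-1$ parallel $(\alpha_i,\beta_i)$ pairs, and $(a,a')$ is the sole dual pair. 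Hence every elementary move is an edge of $\mathcal{B}$.

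It then suffices to show that any two cut systems on $S$ are joined by a sequence of elementary moves. This is classical: the graph whose vertices are cut systems and whose edges are elementary moves is the $1$-skeleton of the Hatcher--Thurston complex, which was shown to be connected by Hatcher and Thurston in 1980. For $g=1$ a cut system is a single non-separating curve on the torus and the elementary moves recover the edges of the Farey graph, whose connectivity is standard. Since the cut system graph is a subgraph of $\mathcal{B}$ on the same vertex set, $\mathcal{B}$ is connected.

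If a self-contained argument is preferred to citing Hatcher--Thurston, I would induct on the minimal geometric intersection number $|\alpha \cap \beta|$ of representatives of the two cut systems. Cutting $S$ along $\alpha$ turns $\beta$ into a system of properly embedded arcs on a $2g$-punctured sphere; pick an outermost such arc $\gamma$ cobounding a disk $D$ with a subarc of some $a_i$, and replace $a_i$ by the curve $a_i'$ obtained by banding $a_i$ across $D$. The resulting cut system $\alpha'$ differs from $\alpha$ by a single elementary move and satisfies $|\alpha' \cap \beta| < |\alpha \cap \beta|$, completing the induction once a base case $|\alpha \cap \beta| \le 1$ (handled directly) is established. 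The main obstacle in this route is verifying at each step that $\alpha'$ really is a cut system, i.e. that $a_i'$ is non-separating and that the complement of $\alpha'$ in $S$ is still a $2g$-punctured sphere; this follows from a careful Euler characteristic and component-counting argument using that $D$ meets $\alpha$ only in the chosen subarc of $a_i$.
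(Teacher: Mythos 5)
Your primary argument---observing that every edge of the Hatcher--Thurston cut-system graph $X_g$ is automatically an edge of $\mathcal{B}$ (since such a pair is a standard diagram for some $\#^k S^1\times S^2$) and then invoking Hatcher--Thurston's connectivity of $X_g$---is exactly the paper's proof. The self-contained induction you sketch afterward is a separate, optional route with gaps you yourself flag, but the cited argument stands on its own and matches the paper.
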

\begin{proof}
Hatcher and Thurston defined a graph $X_g$ in which vertices correspond to cut systems on a genus $g$ surface, and edges correspond to pairs of cut systems in which one pair of curves is dual and the rest are parallel (so the two cut systems give a standard Heegaard diagram for $S^1 \times S^2$).  They additionally proved that $X_g$ is connected \cite{HatcherThurston1}.  Our graph $\mathcal{B}$ has the same set of vertices and contains every edge of $X_g$, so it must also be connected.
\end{proof}

\begin{prop}
The graph $\mathcal{A}$ is connected.
\end{prop}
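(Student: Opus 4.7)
The plan is to reduce the connectedness of $\mathcal{A}$ to the already-established connectedness of Hatcher--Thurston's graph $X_g$ that appeared in the previous proposition. The graphs $\mathcal{A}$ and $X_g$ share the same vertex set but generally different edge sets---an $X_g$-edge represents a standard Heegaard diagram of $S^1 \times S^2$, while an $\mathcal{A}$-edge represents one of $S^3$---so it suffices to show that the two endpoints of every $X_g$-edge admit a common dual cut system. Such a common dual gives a length-$2$ path in $\mathcal{A}$, and concatenating one such replacement for each edge along an $X_g$-path between two cut systems produces the desired $\mathcal{A}$-path.

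Unpacking an $X_g$-edge, the two cut systems have the form $V_1 = \{a_1,\ldots,a_{g-1},a\}$ and $V_2 = \{a_1,\ldots,a_{g-1},b\}$ with $\#|a \cap b| = 1$ and $a,b$ each disjoint from every $a_i$. The concrete task is to produce a cut system $W = \{c_1,\ldots,c_g\}$ such that, for $i,j < g$, $\#|c_i \cap a_j| = \delta_{ij}$ and $c_i$ is disjoint from both $a$ and $b$, and such that $c_g$ is disjoint from every $a_i$ with $i<g$ and meets each of $a$ and $b$ exactly once. From the intersection pattern it then follows immediately that $W$ is dual to $V_1$ and to $V_2$.

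To construct $W$, I would first, for each $i<g$, pick a small annular neighborhood of $a_i$ disjoint from the neighborhoods chosen for the other $a_j$ ($j<g$) and disjoint from $a$ and $b$, and take $c_i$ inside this neighborhood to be a simple closed curve crossing $a_i$ once. For $c_g$, restrict to the complementary subsurface, which contains a genus-$1$ piece on which $a$ and $b$ sit as a dual pair; take $c_g$ to be a simple closed curve in the homology class $[a]+[b]$ on this piece, which meets each of $a,b$ exactly once and stays disjoint from all the $c_i$ with $i<g$. Finally I would verify that $W$ is in fact a cut system: the $c_i$ are pairwise disjoint and linearly independent in $H_1(\Sigma)$ by construction, so cutting $\Sigma$ along them yields a connected genus-$0$ surface with $2g$ boundary components.

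The main obstacle is the bookkeeping required to choose all the $c_i$ simultaneously with the correct disjointness pattern, in particular making sure $c_g$ can be drawn disjoint from every other $c_k$ while still achieving the prescribed intersection numbers with $a$ and $b$. This is handled by fixing the handle neighborhoods of the $a_i$ (for $i<g$) and the curves $c_i$ inside them first, and only then selecting $c_g$ in the remaining complementary subsurface. Everything else---the intersection counts witnessing duality with both $V_1$ and $V_2$, and the connectedness reduction to $X_g$---is immediate once this construction is in place.
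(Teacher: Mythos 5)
Your approach is the same as the paper's: use Hatcher--Thurston's connectedness of $X_g$ and replace each $X_g$-edge by a length-two path in $\mathcal{A}$ through an explicitly constructed common dual cut system; the paper does this via a figure, you do it via an explicit construction. One slip to fix: a simple closed curve contained in an \emph{annular} neighborhood of $a_i$ cannot cross $a_i$ once (it is either inessential or parallel to $a_i$); you want $c_i$ to lie in a once-punctured-torus neighborhood of the $i$th handle (as your phrase ``handle neighborhoods'' in the final paragraph suggests), disjoint from the analogous neighborhoods of the other handles and from the genus-one piece carrying $a$ and $b$. With that correction the construction, the verification of duality to both $V_1$ and $V_2$, and the check that $W$ is a cut system via homological independence all go through.
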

\begin{proof}

We again use the fact that the graph $X_g$ defined by Hatcher and Thurston is connected.  We replace each edge of $X_g$ with a sequence of two edges in $\mathcal{A}$ connecting the same two vertices as shown in Figure \ref{Bpathfull}.  Note that for any two cut systems connected in by an edge in $X_g$ there is a diffeomorphism of the surface taking them to the cut systems in the top left and top right of the figure.
\begin{figure}[h]
\centering
\includegraphics[height=2in]{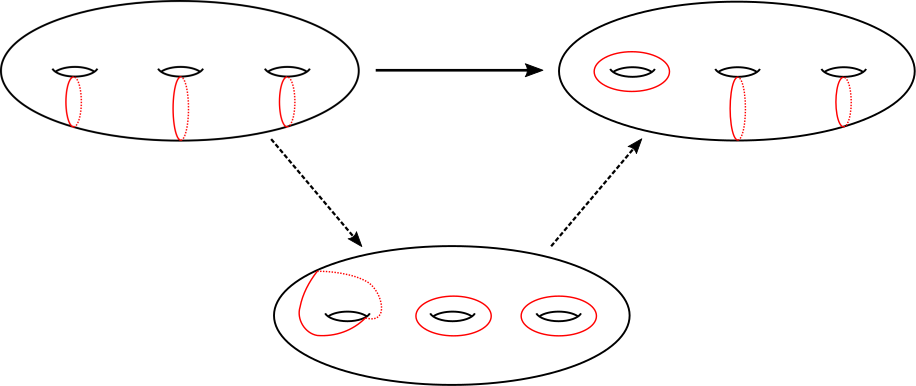}
\caption{We replace the top edge (an edge in $X_g$) with two edges in $\mathcal{A}$, passing through an intermediate cut system. }
\label{Bpathfull}
\end{figure}
\end{proof}

We note that $\mathcal{A}$ is a subgraph of $\mathcal{B}$, so $d_\mathcal{B}(v_1,v_2) \le d_\mathcal{A}(v_1,v_2)$ for any two cut systems $v_1,v_2$.

We can also take the set of vertices to be the set of cut systems up to both slides and isotopy instead of only isotopy.  This gives new graphs $ \mathcal{A}'$ and $\mathcal{B}'$ with associated metrics $d_{\mathcal{A}'}$ and $d_{\mathcal{B}'}$.  Since there exist natural quotient maps $\mathcal{A} \to \mathcal{A'}$ and  $\mathcal{B} \to \mathcal{B'}$, we see that $\mathcal{A'}$ and $\mathcal{B'}$ are also connected, and that $d_{\mathcal{A}'} \le d_\mathcal{A}$ and $d_{\mathcal{B}'} \le d_\mathcal{B}$.  

The graphs $\mathcal{A'}$ and $\mathcal{B'}$ can be used to define an invariant of Heegaard splittings as follows:  If $S$ is a Heegaard surface for a 3--manifold, define $d_{\mathcal{A'}}(S)$ to be the distance in $\mathcal{A'}$ between cut systems defining the two sides of the splitting, and define $d_{\mathcal{B'}}(S)$ similarly.  This distance is well defined by the way we defined the vertex sets for $\mathcal{A'}$ and $\mathcal{B'}$.  Both distances are $\ge 2$ unless $M$ is a connect sum of $S^1 \times S^2$s.

\section{Embedding more complicated 3--manifolds}

\begin{thm}
Assume $M$ is a 3--manifold that is not a connect sum of $S^1 \times S^2$s, and $S$ is a genus $g$ Heegaard surface.  Then $M$ lies almost in the spine of a minimal genus trisection of $\#^n S^2 \times S^2$ or $\#^n S^2 \tilde \times S^2$ where $n = g(d_{\mathcal{A}'}(S)-1)$.
\label{generalembeddingthm}
\end{thm}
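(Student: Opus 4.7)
The natural approach is induction on the distance $d = d_{\mathcal{A}'}(S)$, with base case $d = 2$ generalizing Proposition \ref{embedding_n_1}. Fix a minimizing path $\alpha = v_0, v_1, \ldots, v_d = \beta$ in $\mathcal{A}'$, so each consecutive pair $(v_{i-1}, v_i)$ is dual and gives a standard genus $g$ Heegaard diagram for $S^3$ on $S$. The intermediate vertices $v_1, \ldots, v_{d-1}$ serve as ``mediators'' between the cut systems $\alpha$ and $\beta$ of the Heegaard splitting of $M$.

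For the base case $d = 2$, let $\gamma = v_1$. I would construct a genus $2g$ trisection diagram on a surface $\Sigma$ obtained from $S$ by a stabilization that adds $g$ extra handles. On $\Sigma$ I would place three cut systems, each of $2g$ curves: an extension of the original $\alpha$-curves by $g$ canceling curves on the new handles, an analogous extension of the original $\beta$-curves, and an extension of the $\gamma$-curves. The fact that $\gamma$ is dual to both $\alpha$ and $\beta$ is exactly what allows the extensions to be chosen so that each of the three pairings is a standard diagram for $\#^k S^1 \times S^2$, making the data a valid trisection. The manifold $M$ then sits in the spine as a union of three subhandlebodies: $M_\alpha \subset Y_\alpha$ cored by the original $\alpha$-curves (so $M_\alpha = H_\alpha$), and $M_\beta \subset Y_\beta$, $M_\gamma \subset Y_\gamma$ cored by the original $\beta$- and $\gamma$-curves, whose union with 3-ball caps in the $\mathring X_i$'s reconstitutes the other Heegaard handlebody $H_\beta$. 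A Kirby calculus computation generalizing Figure \ref{simplestcasekirby} to $g$ Hopf pairs (or equivalently an intersection-form computation) identifies $X$ as $\#^g S^2 \times S^2$ or $\#^g S^2 \tilde \times S^2$, according to the parity of the framings inherited from $\Sigma$.

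For the inductive step, given a path of length $d > 2$, I would iteratively extend the base-case construction, with each interior vertex of the path contributing $2g$ to the trisection genus and $g$ to the number of $S^2 \times S^2$ (or $S^2 \tilde \times S^2$) summands. Concretely, I would apply the base case to each consecutive triple $(v_{i-1}, v_i, v_{i+1})$ along the path and glue the resulting genus $2g$ trisections together via a trisection connect sum identifying matching cut-system data. The subhandlebodies of $M$ extend through the $d-1$ blocks using the intermediate $v_i$'s as connectors inside the $Y_\gamma$-piece, so that $M_\alpha$ lives ``on the $\alpha$-end,'' $M_\beta$ ``on the $\beta$-end,'' and $M_\gamma$ threads through the interior.

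The main obstacle is verifying that each pair of the three cut systems on $\Sigma$ gives a standard Heegaard diagram for some $\#^k S^1 \times S^2$, which is the condition that promotes the data to a genuine trisection diagram; this is precisely where the $\mathcal{A}'$-duality of consecutive cut systems along the chosen path is used essentially, both in the base case and in each gluing of the inductive step. Secondary obstacles include keeping careful track of the parities that distinguish $S^2 \times S^2$ summands from $S^2 \tilde \times S^2$ summands across the induction, and verifying that no $S^2$ boundary component of the subhandlebody union $M_\alpha \cup M_\beta \cup M_\gamma$ carries disks of all three colors, so that each such boundary is capped by a 3-ball in some $\mathring X_i$ by the criterion following Proposition 4.
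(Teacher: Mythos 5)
Your base case already fails at the level of being a trisection diagram. You propose putting (extensions of) $v_0$ and $v_d$ on two of the three cut systems, with $v_1$ as the third. But $v_0$ and $v_d$ form a Heegaard diagram for $M$, and stabilizing this pair by $g$ canceling curves on new handles still gives a Heegaard diagram for $M$ (or $M\#N$). The definition of a trisection requires \emph{every} pair of the three cut systems to determine some $\#^k S^1\times S^2$. Since $M$ is assumed not to be such a connected sum, the pair $(\alpha,\beta)$ you build cannot satisfy this, so the purported trisection is never valid. This is not a fixable technicality: the whole scheme of letting two of the three cut systems be the two sides of $M$'s Heegaard splitting is incompatible with the trisection axioms, and the same confusion propagates through your inductive step. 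You need $M$ to be recovered as a union of \emph{sub}handlebodies of the $Y_{ij}$ (plus balls), not as a union of the $Y_{ij}$ themselves.

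The paper's construction avoids this by an entirely different architecture. The trisection surface $\Sigma$ is built directly as $2(d_{\mathcal{A}'}(S)-1)$ copies of $S$ tubed together. Two of the cut systems (red and green) are fixed ``spacer'' curves: meridians on one end, longitudes on the other, and $2g$ symmetric curves between each adjacent pair of blocks, alternating colors. Only the third cut system (blue) carries the path data, laying out the diagrams $\mathcal{D}_2,\dots,\mathcal{D}_m,\mathcal{D}_{m-1},\dots,\mathcal{D}_1$ across the blocks in a folded pattern. Pairwise standardness of the cut systems holds because each pair only ever sees either the trivial red/green scaffolding or a single dual pair $\mathcal{D}_i,\mathcal{D}_{i+1}$ on each block, never a Heegaard diagram for $M$. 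The copy of $M$ is then located as one subhandlebody of each color plus some 3-balls, and $X$ is identified by treating blue as a Kirby diagram in the $S^3$ given by red and green, splitting off Hopf links one layer at a time. This direct construction is a different route from an inductive connect-sum argument, and it is what makes the ``not $\#^k S^1\times S^2$'' hypothesis a nonissue rather than an obstruction.
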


\begin{proof}
We view the trisection surface $\Sigma$ as $2(d_{\mathcal{A}'}(S)-1)$ copies of $S$ laid side by side and with the adjacent copies connected by a tube.  Between each adjacent pair, draw $2g$ symmetric nonparallel curves, alternating between green and red for each adjacent pair of copies of $S$.  On the rightmost copy of $S$, draw $g$ meridians in green, and on the leftmost copy draw $g$ longitudes in red.  At this point the diagram should look like Figure \ref{basicpattern}, with the pattern extended for higher $g$ or $d_{\mathcal{A}'}(S)$.

Now since $\mathcal{A}'$ is connected we know that there is a sequence of diagrams $\mathcal{D}_0,\cdots,\mathcal{D}_m$ where $m=d_{\mathcal{A}'}(S)$, the pair $\mathcal{D}_0$ and $\mathcal{D}_m$ determine a Heegaard diagram for $M$, and each adjacent pair $\mathcal{D}_i,\mathcal{D}_{i+1}$ gives a (possibly nonstandard) diagram for $S^3$.  By applying slides on the diagrams and a diffeomorphism to $S$ we may assume that $\mathcal{D}_0$ is a set of standard meridians and $\mathcal{D}_1$ a set of standard longitudes dual to those meridians.  On the copies of $S$, from left to right, we then draw $\mathcal{D}_2, \mathcal{D}_3, \cdots, \mathcal{D}_{m-1}, \mathcal{D}_{m},\mathcal{D}_{m-1}, \cdots, \mathcal{D}_2, \mathcal{D}_1$ in blue to give the third set of curves.  When $i$ is even we draw $\mathcal{D}_i$ reflected across the vertical axis in the plane.  This will result in the desired trisection diagram.  If $M$ is a lens space and $S$ a genus 1 Heegaard surface, then $d_{\mathcal{A'}}(S) = 2$ and the trisection produced is precisely that constructed in Proposition \ref{embedding_n_1}.

\begin{figure}[h]
\centering
\includegraphics[height=2.5in]{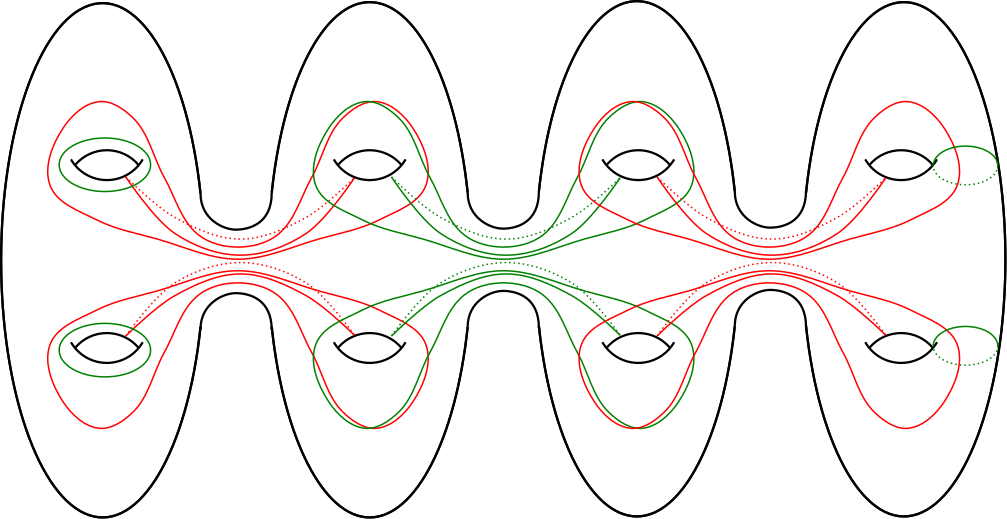}
\caption{The set of red and green curves when $g=2$ and $d_{\mathcal{A}'}(S) = 3$.  The surface is drawn as $2(3-1)=4$ copies of a genus $2$ surface tubed together sequentially.}
\label{basicpattern}
\end{figure}

Let us first look more closely at a slightly more complicated example.  Let $M = L(7,2)$.  Then by setting $\mathcal{D}_1$ to be a $(1,0)$ curve, $\mathcal{D}_2$ a $(3,1)$ curve and $\mathcal{D}_3$ a $(7,2)$ curve we see that $d_{\mathcal{A}'}(S) = 3$.  Our construction produces the trisection diagram shown in Figure \ref{harderlensspace}.  We find a copy of $L(7,2)$ lying in the trisection using the curves shown in Figure \ref{harderlensspacefindingM}.  The curves determine a genus 3 green handlebody, a genus 2 red handlebody, and a genus 1 blue handlebody, all lying in the spine.  The union of these three handlebodies together with two balls lying outside the spine is a copy of $L(7,2)$.  To see this, we take our decomposition into three subhandlebodies and two 3-balls, collapse the three balls, and then glue the genus 1 and 2 subhandlebody together to form a genus 3 handlebody.   Diagrammatically we obtain a Heegaard diagram given by first compressing the leftmost shared green and blue curve to get a genus 3 surface, and then recoloring the blue and red curves to the same color to determine compressing disks for one side of the splitting.  The green curves determine the other side of the splitting.  This diagram can be shown to be a twice stabilized diagram for $L(7,2)$ as shown in Figure \ref{harderlensspaceheeg}.

\begin{figure}[h]
\centering
    \begin{subfigure}[b]{\textwidth}
        \centering
        \includegraphics[height=1in]{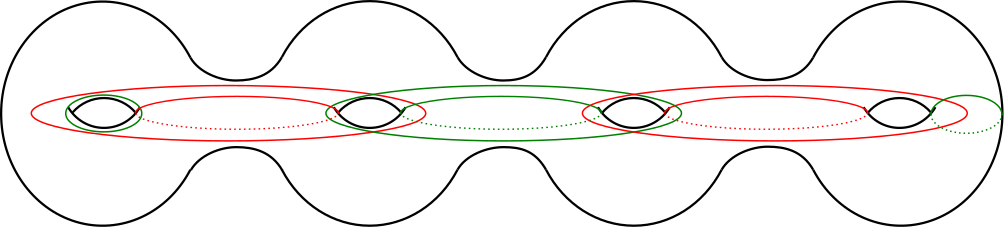}
        \caption{The red and green curves for the diagram.  This part will always be as shown when $g=1$ and $d_{\mathcal{A}'}(S) = 3$.}
    \end{subfigure}%

    \begin{subfigure}[b]{\textwidth}
        \centering
        \includegraphics[height=1in]{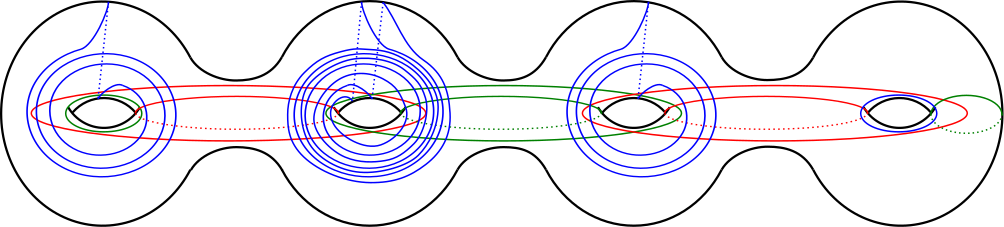}
        \caption{ The full trisection diagram.  Note that the $(3,1)$ curves have been reflected to be $(-3,1)$ curves.}
    \end{subfigure}
\caption{Constructing a trisection diagram for $\#^2 S^2 \tilde \times S^2$ such that $L(7,2)$ lies almost in the spine.}
\label{harderlensspace}
\end{figure}

\begin{figure}[h]
\centering
\includegraphics[height=1in]{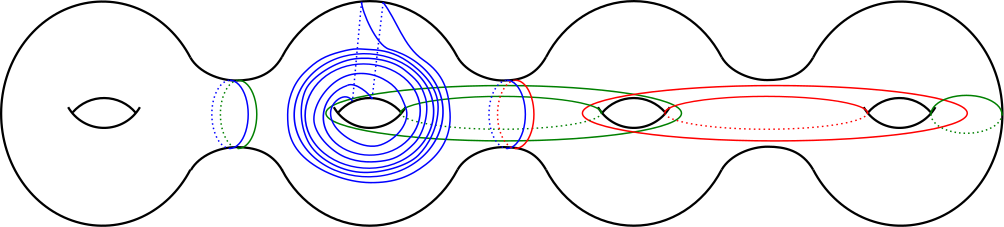}
\caption{A copy of $L(7,2)$ in the trisection.}
\label{harderlensspacefindingM}
\end{figure}

\begin{figure}[h]
\centering
    \begin{subfigure}[b]{\textwidth}
        \centering
        \includegraphics[height=1in]{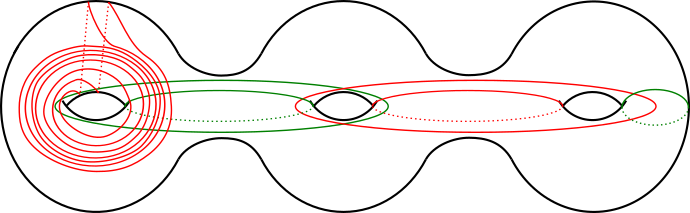}
        \caption{The diagram after restricting to the appropriate genus 3 surface and recoloring the blue curve to red.}
    \end{subfigure}%

    \begin{subfigure}[b]{\textwidth}
        \centering
        \includegraphics[height=1in]{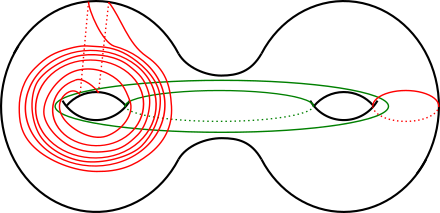}
        \caption{ The diagram after one destabilization.}
    \end{subfigure}
        \begin{subfigure}[b]{\textwidth}
        \centering
        \includegraphics[height=1in]{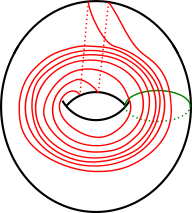}
        \caption{ The diagram after two destabilizations.}
    \end{subfigure}
\caption{The Heegaard diagram for $L(7,2)$.  The right most green meridian is dual to a red curve, compressing along the green curve and removing the dual red curve corresponds to a destabilization of the splitting.  After this destabilization there is a similar destabilization visible with the red meridian.  After both destabilizations we are left with standard diagram of $L(7,2)$.}
\label{harderlensspaceheeg}
\end{figure}

We now identify $X$ from the trisection diagram.  Note that the red and green curves determine a Heegaard splitting of $S^3$.  We view the surface as lying in $S^3$ as depicted in Figure \ref{ins3}, folded over itself repeatedly to look like several parallel copies of a the torus $S$ connected by tubes.  The blue curves lie on this as the copies of $\mathcal{D}_1, \mathcal{D}_2,\mathcal{D}_3,\mathcal{D}_2$ from the inner torus to the outer torus.  So the (1,0) curve $\mathcal{D}_1$ lies on the innermost torus, a (3,1) curve on the second torus, the $(7,2)$ curve on the third torus, and another $(3,1)$ curve on the outermost torus.  Note that the earlier reflections of the (3,1) curves were undone by the process of folding the surface.  The four blue curves give a Kirby diagram for $X$, with the framings of the curves given by the surface.  For a (p,q) curve on a torus, the framing induced by the torus is precisely $pq$, so the framings of the four blue curves are $0,3,14$, and $3$ respectively from the inside outwards.  Moreover, curves with adjacent indices have linking number $\pm 1$ by construction.

If the inner (3,1) curve is slid over the outer (3,1) curve using an outwardly pointing radial arc to guide the slide, the result is a 0 framed unknot that links the (7,2) curve once.  This 0 framed unknot can then be used to unknot the (7,2) curve as well as unlink it from any other components.  These two curves can be pushed off two the side as a Hopf link with framings 0 and 14, which is a diagram for $S^2 \times S^2$.  We are left with the (1,0) curve and one of the (3,1) curves.  The (1,0) curve is a 0 framed unknot linking the (3,1) curve once.  Again have a Hopf link, this time with framings 0 and 3, giving a diagram for $S^2 \tilde \times S^2$.  Putting the two pieces together shows that the 4--manifold is $S^2 \times S^2 \# S^2 \tilde \times S^2 \cong \#^2 S^2 \tilde \times S^2$.  
\begin{figure}[h]
\centering
\includegraphics[height=2in]{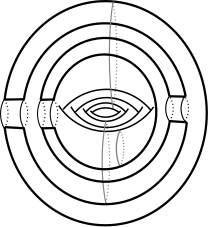}
\caption{The genus 4 trisection surface lying in the copy of $S^3$ given by the green and red curves.  The surface appears as four parallel tori connected by tubes.}
\label{ins3}
\end{figure}

We return to the general case.  The Kirby diagram is given by $\mathcal{D}_1,\cdots,\mathcal{D}_m,\mathcal{D}_{m-1},\cdots,\mathcal{D}_2$ lying on parallel copies of $S$, with their framings given by the embedding in $S$.  $\mathcal{D}_i$ and $\mathcal{D}_{i+1}$ are dual sets of curves after possibly performing some slides on both diagrams.  Then sliding curves from the inner $\mathcal{D}_{m-1}$ over the same curves from the outer $\mathcal{D}_{m-1}$ gives a set of 0 framed unknots that each link one of the curves of $\mathcal{D}_m$ once.  They can therefore be used to unknot the curves of $\mathcal{D}_m$ as well as unlink them from the rest of the curves, resulting in several Hopf links split from the rest of the diagram.  Each Hopf link has one zero framed component, so it represents a copy of $S^2 \times S^2$ or $S^2 \tilde \times S^2$ depending on whether the framing of the other component is even or odd.  After pulling away these pairs, the rest of the diagram is given by copies of $\mathcal{D}_1,\cdots,\mathcal{D}_{m-1},\mathcal{D}_{m-2},\cdots,\mathcal{D}_2$ (now peaking at $m-1$) lying on parallel copies of $S$, and the process can be repeated by sliding curves of the inner copy of $\mathcal{D}_{m-2}$ over curves of the outer copy.  The final diagram is that for a connect sum of $S^2 \times S^2$s and $S^2 \tilde \times S^2$s.
\end{proof}

Note that in the above proof we have shown that the trisection is minimal genus and identified the manifold, but we have not identified the trisection itself.  That is, the trisection may or may not be standard.  We also observe that a careful analysis of how the embedded 3--manifold in this construction lies in the 4--manifold shows that the 3--manifold will always be separating.

Standard 4--manifold theory tells us that $(S^2 \times S^2 )\# (S^2 \tilde \times S^2)$ is diffeomorphic to $ \#^2 S^2 \tilde \times S^2$.  Therefore we can connect sum the 4--manifold obtained in the theorem with the standard genus 2 trisection of $S^2 \tilde \times S^2$ to get the following simpler but weaker statement.
\begin{corr}
Every orientable 3--manifold embeds almost in the spine of a minimal genus trisection of some $\#^n S^2 \tilde \times S^2$.
 \end{corr}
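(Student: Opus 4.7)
The plan is to reduce to Theorem \ref{generalembeddingthm} and then clean up the possible $S^2\times S^2$ summands using the connect sum trick indicated in the remark. First I would apply the theorem to any closed orientable $3$--manifold $M$ (handling the trivial case $M = \#^k S^1\times S^2$ separately, where $M$ obviously lies in the spine of the standard genus $k$ trisection of $\#^k(S^2\tilde\times S^2)$ after stabilizing). This gives an embedding of $M$ almost in the spine of a minimal genus trisection of either $X = \#^n(S^2\tilde\times S^2)$ --- in which case we are done --- or of $X = \#^n(S^2\times S^2)$, which is the case we must address.

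In the latter case, I would take the trisected connect sum $X \,\#\, (S^2\tilde\times S^2)$, where the second factor is equipped with its standard genus $2$ trisection. Trisected connect sum is a well-defined operation in which one removes a small trisected $4$--ball from each side and glues; the resulting trisection has genus $g(X) + g(S^2\tilde\times S^2)$, and since minimal trisection genus is additive under connect sum (this is a standard fact from \cite{GayKirby1}), the result is still a minimal genus trisection. Because the connect sum can be performed in a small ball disjoint from the embedded $M \subset X$, the embedding of $M$ is carried along unchanged and still lies almost in the spine of the new trisection.

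Next I would invoke the diffeomorphism $(S^2\times S^2)\#(S^2\tilde\times S^2)\cong \#^2(S^2\tilde\times S^2)$, applied inductively to each of the $n$ copies of $S^2\times S^2$ in $X$, to identify the smooth type of $X\,\#\,(S^2\tilde\times S^2)$ with $\#^{n+1}(S^2\tilde\times S^2)$. This identification is purely smooth: it does not affect the spine location of $M$, only the name we give the ambient $4$--manifold. Combined with the previous step, this gives $M$ almost in the spine of a minimal genus trisection of $\#^{n+1}(S^2\tilde\times S^2)$.

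The only subtle point, and the step I would write most carefully, is the additivity of minimal trisection genus under connect sum, together with the observation that the ambient diffeomorphism used to convert $\#^n(S^2\times S^2)\#(S^2\tilde\times S^2)$ into $\#^{n+1}(S^2\tilde\times S^2)$ need not preserve the trisection, but this is irrelevant: we only need the resulting trisection to have minimal genus as a trisection of \emph{whichever} $4$--manifold it trisects, and since both descriptions refer to the same smooth $X\,\#\,(S^2\tilde\times S^2)$ with the same trisection, the genus bound transfers automatically.
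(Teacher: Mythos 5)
Your overall route is the same as the paper's: apply Theorem \ref{generalembeddingthm}, and if the output is $\#^n S^2\times S^2$, connect sum with the standard genus--$2$ trisection of $S^2\tilde\times S^2$ inside a small ball disjoint from $M$, then invoke $(S^2\times S^2)\#(S^2\tilde\times S^2)\cong\#^2 S^2\tilde\times S^2$. That is exactly the one--line argument the paper gives.

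There is, however, one real error in your justification. You assert that ``minimal trisection genus is additive under connect sum (this is a standard fact from \cite{GayKirby1}).'' That is not a theorem --- additivity of trisection genus under connected sum is a well-known \emph{open conjecture}, not a result of Gay--Kirby (what they show is that connected sum of trisections is well defined and genus--additive as an \emph{operation}, not that it preserves minimality). Fortunately, for the manifolds at hand you do not need the conjecture: for any $(g;k_1,k_2,k_3)$--trisection one has $\chi(X)=2+g-k_1-k_2-k_3$, hence $g\ge\chi(X)-2$; for $\#^{n+1}S^2\tilde\times S^2$ this gives $g\ge 2(n+1)$, and the connect-sum trisection you built has genus exactly $2n+2$, so it is minimal by this direct count. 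Replacing the appeal to additivity by this Euler-characteristic lower bound closes the gap without changing anything else. (Your handling of the excluded case $M=\#^kS^1\times S^2$ is also a bit loose --- the standard genus--$2k$ trisection of $\#^kS^2\tilde\times S^2$ has $k_i=0$ so $Y_\alpha\cup Y_\beta=S^3$, not $\#^kS^1\times S^2$ --- but this is a side issue the paper itself leaves implicit.)
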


Similarly, by stabilizing enough times we can assume the trisection is standard at the cost of losing control over the genus.

It is well known that any 3--manifold embeds in some $\#^n S^2 \times S^2$ \cite{GompfStipsicz1}.  It is natural to ask if this is also true when we require the 3--manifolds to lie almost in the spine of a trisection.  For lens spaces, at least, this is the case.
\begin{prop}
Every lens space $L(p,q)$ lies almost in the spine of a minimal genus trisection of some $\#^n  S^2 \times S^2$.
\end{prop}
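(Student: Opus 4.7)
My plan is to apply Theorem~\ref{generalembeddingthm} to a genus~$1$ Heegaard splitting of $L(p,q)$, but to choose the path in $\mathcal{A}'$ (which is the Farey graph when $g=1$) so that every summand produced is $S^2 \times S^2$ rather than $S^2 \tilde \times S^2$. In that construction each interior slope $(p_i, q_i)$ contributes a Hopf link with framings $(0, p_i q_i)$, and such a Hopf link represents $S^2 \times S^2$ exactly when $p_i q_i$ is even, that is, when $(p_i, q_i) \not\equiv (1,1) \pmod 2$. So the task reduces to finding a Farey path from the standard meridian to some slope representing $L(p,q)$ that avoids the odd/odd parity class entirely.

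To arrange the endpoint, recall that $L(p,q) \cong L(p, q')$ whenever $q' \equiv \pm q^{\pm 1} \pmod p$, so I am free to replace $q$ by a parity-convenient representative. If $p$ is even, then $\gcd(p,q) = 1$ forces $q$ odd and $pq$ is already even; if $p$ is odd, then exactly one of $q$ and $p-q \equiv -q \pmod p$ is even, so replacing $q$ by $p-q$ if necessary gives $pq$ even. Thus the endpoint slope can always be taken outside the odd/odd class.

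To construct the interior, I claim the subgraph $\mathcal{F}^\times$ of the Farey graph obtained by deleting all odd/odd slopes is connected. For $v = (a,b)$ odd/odd, every Farey neighbor $(x,y)$ satisfies $|ay - bx| = 1$, which mod~$2$ forces $x + y$ odd, so $(x,y)$ is itself non-odd/odd; moreover, the parities of cyclically consecutive Farey neighbors of $v$ strictly alternate between $(1,0)$ and $(0,1) \pmod 2$, since any two such neighbors together with $v$ span a Farey triangle and the three vertices of a Farey triangle always realize all three parity classes. Hence any subpath $u_1 \to v \to u_2$ through an odd/odd $v$ can be rerouted by walking from $u_1$ to $u_2$ through the cyclic link of $v$, staying entirely in $\mathcal{F}^\times$. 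Applying this detour at each odd/odd vertex of a continued-fraction path from the meridian to the chosen endpoint produces the required Farey path.

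Feeding this path into the construction of Theorem~\ref{generalembeddingthm} then yields a minimal genus trisection of $\#^n S^2 \times S^2$ for some $n$ depending on the path length, with $L(p,q)$ embedded almost in the spine. The main obstacle is the connectivity argument for $\mathcal{F}^\times$: although the parity-alternation check for the link of an odd/odd vertex is elementary, one must verify that the detour procedure terminates for every odd/odd vertex encountered in the original path and that iterating the detours does not route the path through a region with more odd/odd vertices than before.
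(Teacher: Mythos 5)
Your proposal follows the same overall strategy as the paper: apply Theorem~\ref{generalembeddingthm} to a genus~$1$ splitting, observe that a slope $(p_i,q_i)$ contributes $S^2\times S^2$ exactly when $p_iq_i$ is even, normalize the endpoint via $L(p,q)\cong L(p,p-q)$, and reduce to connectivity of the subgraph of the Farey graph obtained by deleting the odd/odd slopes (the paper's ``even Farey graph''). The only place you diverge is in how you prove that connectivity lemma. The paper exploits the iterative construction of the Farey tessellation: a new vertex is always the apex of a triangle whose two base vertices appeared earlier, and each Farey triangle has exactly two non-odd/odd vertices, so tracing the unique earlier even neighbor back step by step gives an explicit path in the even Farey graph to $0/1$ or $1/0$. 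You instead take an arbitrary Farey path and reroute locally around each odd/odd vertex $v$ through its link. Both are correct; the paper's version is more constructive and feeds directly into the later bound $\varepsilon_S(L(p,q))\le p-1$, whereas yours is a clean existence argument.

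On the potential gap you flag at the end: the detour procedure does terminate, and you should not worry about it. Each detour replaces one odd/odd vertex $v$ by a finite subpath of its link, and every vertex of that link is non-odd/odd (your determinant-mod-$2$ computation already shows this), so each detour strictly decreases the count of odd/odd vertices on the path while leaving the rest of the path unchanged. In particular the neighbors $u_1,u_2$ of $v$ on the path are Farey-adjacent to $v$ and hence themselves non-odd/odd, so they are never destroyed by a later detour and the process stops after finitely many steps. One small correction: the link of a vertex in the Farey tessellation is a bi-infinite line, not a cycle, but this does not affect the argument since $u_1$ and $u_2$ are still joined by a unique finite arc of that line lying entirely in the even Farey graph. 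Also note that your observation about strict alternation of parity classes along consecutive link vertices, while true, is not needed --- it suffices that the link avoids the odd/odd class entirely.
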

\begin{proof}
First we must observe that copies of $S^2 \times S^2$ pop up when pairs of curves separate out as Hopf links with framings 0 and $n$ with $n$ even, and $S^2 \tilde \times S^2$ when pairs of curves separate out as Hopf links with framings 0 and $n$ with $n$ odd.  The framing is determined by the Heegaard surface into which the curve is embedded. If the Heegaard surface $S$ is genus 1, then every curve is a $(p,q)$ curve, and the framing coming from the surface is $pq$.  Thus, we will get copies of $S^2 \times S^2$ whenever $pq$ is even, and copies of $S^2 \tilde \times S^2$ when both $p$ and $q$ are odd.  This leads us to search for a sequence of diagrams $\mathcal{D}_i$, where $\mathcal{D}_i$ is a $(p_i,q_i)$ curve, $(p_0,q_0) = (0,1)$, $(p_1,q_1) = (1,0)$, $(p_m,q_m) = (p,q)$ and all products $p_i q_i$ are even.

For a genus 1 surface, the graphs $\mathcal{A}$ and $\mathcal{A'}$ are isomorphic to the Farey graph.  The Farey graph is the graph with vertices labelled by reduced fractions $0 \le p/q \le 1$ and edges between pairs $p_1/q_1$ and $p_2/q_2$ where $p_1q_2 - p_2 q_1 = \pm 1$.   Therefore there is an edge iff a $(p_1,q_1)$ curve and a $(p_2,q_2)$ curve intersect geometrically once.  We define the \emph{even Farey graph} to be the subgraph with vertices the set of reduced fractions $p/q$ where $pq$ is even.  We use $(p,q)$ and $p/q$ interchangeably to represent these vertex labels.
\begin{lemma}
The even Farey graph is connected.
\end{lemma}
\begin{proof}
The Farey graph can be drawn as a planar graph, where the faces are triangles corresponding to triples $(p_1,q_1),(p_2,q_2),(p_3,q_3)$ with each $p_i q_j-q_i p_j = \pm 1$.  Taking this equation mod 2 shows that one of $p_i,q_i,p_j,q_j$ must be even.  Therefore if some pair $p_i,q_i$ are both odd, then both other pairs must lie in the even Farey graph.

A finite iteration of the Farey graph is shown in Figure \ref{fareydiagram}.  The diagram is planar, with each face a triangle.  The diagram is constructed iteratively starting from the points $1/0$ and $0/1$.  At each iteration, add a vertex at the midpoint of the circle arc connecting two already present vertices, and connect it to those vertices to form a new triangle.  The sizes of the triangles decrease with each successive iteration,  and when some point $p/q$ is added to the diagram it must be added as the outermost point of a triangle, the other two points coming from previous iterations.  We combine this with our previous observation that exactly two of the three points of each triangle must lie in the even Farey graph to see that every vertex lying in the even Farey graph (other than $0/1$ and $1/0$) is connected to exactly one vertex of the even Farey graph coming from a previous iteration of the normal Farey graph.  Tracing this path back eventually gives a path in the even Farey graph connecting $p/q$ with either $1/0$ or $0/1$.  It follows that the even Farey graph is indeed connected.  We can also see that if $p/q$ lies between $1/0$ and $1/1$ in the diagram, the path will reach $1/0$ before $0/1$, since it cannot travel through $1/1$.
\begin{figure}[h]
\centering
\includegraphics[height=4in]{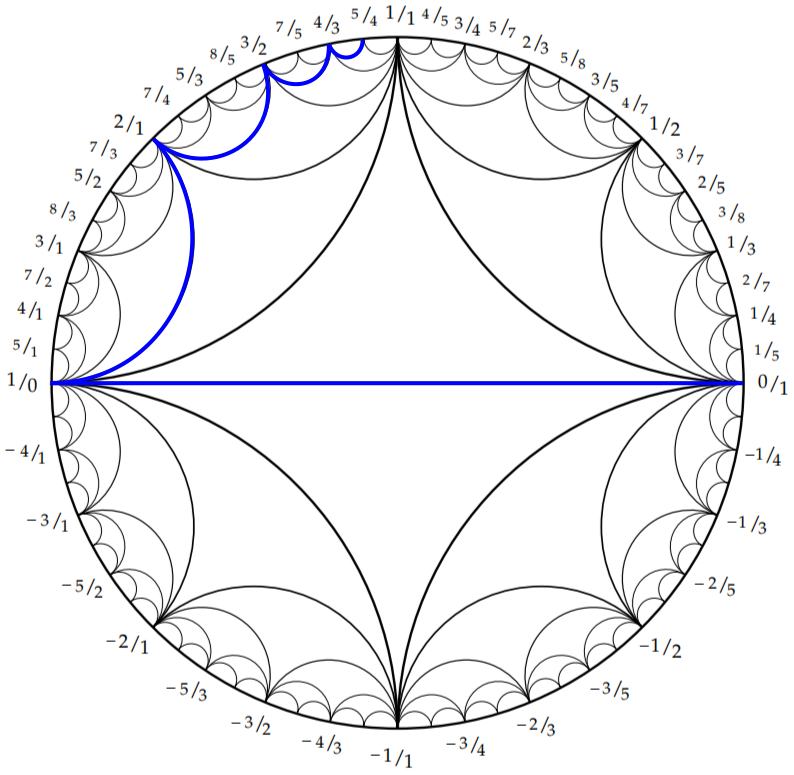}
\caption{A section of the Farey graph, with the blue path lying in the even Farey graph and connecting $0/1$ with $5/4$.  Adapted with permission from \emph{Topology of Numbers} (p. 16) by Allen Hatcher.  Retrieved from \url{https://www.math.cornell.edu/~hatcher/TN/TNbook.pdf} in January 2018.}
\label{fareydiagram}
\end{figure}
\end{proof}

Now if both $p$ and $q$ are odd we can replace $L(p,q)$ with the homeomorphic $L(p,p-q)$.  Since $(p,p-q)$ lies in the even Farey graph and the even Farey graph is connected, there is a sequence of $(p_i,q_i)$ connecting $(1,0)$ to $(p,p-q)$ where $p_i q_i$ is always even.  Using this sequence of diagrams ensures that every Hopf link that gets split off is a Kirby diagram for $S^2 \times S^2$, so the final 4--manifold is a connect sum of $S^2 \times S^2$s.
\end{proof}

We can also observe that when $n$ is odd the distance between $0/1$ and $n/(n-1)$ grows very large in the even Farey graph but is constant in the normal Farey graph.  We can use the homeomorphism $L(n,n-1) \cong L(n,1)$ together with Proposition \ref{embedding_n_1} to see that $L(n,n-1)$ embeds almost in the spine of a minimal genus trisection of $S^2 \tilde \times S^2$.  However, if we only look for embeddings into connect sums of $S^2 \times S^2$s, not allowing twisted bundles, then we could only construct embeddings into $\#^{(n-1)} S^2 \times S^2$.  There may, of course, be other constructions that allow us to embed $L(n,n-1)$ almost into the spine of $\#^m S^2 \times S^2$ for smaller $m$.  However, our construction here provides some small evidence for the following conjecture:

\begin{conj}
Let $n_1(p,q)$ be the smallest integer for which $L(p,q)$ embeds almost in the spine of a minimal genus trisection of $\#^{n_1} S^2 \times S^2$ and $n_2(p,q)$ the smallest integer for which $L(p,q)$ embeds almost in the spine of a minimal genus trisection of $\#^{n_2} S^2 \tilde \times S^2$.  Then $|n_1 - n_2|$ is unbounded.
\end{conj}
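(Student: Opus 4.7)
The plan is to prove the conjecture by exhibiting a family of lens spaces for which one of $n_1, n_2$ is bounded while the other grows without bound. The natural candidate is $L(n, n-1) \cong L(n, 1)$ with $n$ odd. By Proposition \ref{embedding_n_1}, $L(n, 1)$ embeds almost in the spine of a minimal genus trisection of $S^2 \tilde \times S^2$, so $n_2(n, n-1) \le 1$, and since $L(n, 1)$ does not embed in $S^4$ for $n \ge 2$ one has $n_2(n, n-1) = 1$. The conjecture then reduces to showing $n_1(n, n-1) \to \infty$ as $n \to \infty$ through odd values.

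For the lower bound on $n_1$, the key observation is that a spine embedding is in particular a smooth embedding, so $n_1(p, q) \ge \varepsilon(L(p, q))$, the smooth embedding number of Aceto, Golla, and Larson \cite{AcetoGollaLarson1}. I would therefore try to import their lower bounds on $\varepsilon(L(n, n-1))$ for $n$ odd. Concretely, since $H_3(\#^{n_1} S^2 \times S^2; \Z) = 0$, any embedded $L(n, n-1)$ separates the ambient $4$-manifold as $W_+ \cup_{L(n, n-1)} W_-$ with $b_2(W_+) + b_2(W_-) = 2 n_1$; and since $\#^{n_1} S^2 \times S^2$ is spin while $n$ is odd, the unique spin structure $\mathfrak{s}_0$ on $L(n, n-1)$ extends across each $W_\pm$. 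Applying Ozsv\'ath--Szab\'o's inequalities relating $d(L(n, n-1), \mathfrak{s}_0)$ to the signature and $b_2$ of each spin filling $W_\pm$ should then force $n_1$ to grow with $n$.

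The main obstacle will be checking that this lower bound is genuinely unbounded along the odd subsequence. The correction terms of $L(n, n-1)$ can be computed from its negative continued fraction expansion $n/(n-1) = [2, 2, \ldots, 2]$ of length $n - 1$, and their extremal values grow linearly in $n$; the delicate point is to show that the spin constraint distributes nontrivially between $W_+$ and $W_-$ so that the lower bound on $b_2(W_+) + b_2(W_-)$ does not collapse, and to handle the two possible definiteness types of each piece. Should the direct $d$-invariant analysis along this specific family prove too subtle, a fallback is to substitute $L(n, n-1)$ by another family of lens spaces for which Aceto--Golla--Larson (or subsequent work) supplies explicit unbounded lower bounds on $\varepsilon$, while preserving the upper bound on $n_2$ via Proposition \ref{embedding_n_1} or a suitable low-genus spine embedding produced by Theorem \ref{generalembeddingthm}.
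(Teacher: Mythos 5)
The statement you set out to prove is a \emph{conjecture} in the paper: the author offers no proof, only what he calls ``small evidence.'' Specifically, the paper observes that the construction of Proposition~\ref{embedding_n_1} gives $n_2\bigl(L(n,n-1)\bigr)=1$ while the even Farey graph construction only yields $n_1\bigl(L(n,n-1)\bigr)\le n-1$, and the author explicitly cautions that ``there may, of course, be other constructions'' realizing smaller $n_1$. So there is no proof in the paper for you to be compared against, and the question is whether your plan would actually close the gap.

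It would not, for two reasons. First, the inequality $n_1(p,q)\ge\varepsilon(L(p,q))$ on which your lower bound rests requires $\varepsilon\bigl(L(n,n-1)\bigr)=\varepsilon\bigl(L(n,1)\bigr)$ to be unbounded, and the paper itself supplies evidence that it is not: the author cites Aceto--Golla--Larson's result that $L(19,18)=L(19,1)$ embeds smoothly in $\#^4 S^2\times S^2$, far below the Farey-graph bound of $18$. The family $L(n,1)$ is in fact the worst possible choice for a $d$-invariant lower bound, because $L(n,1)$ bounds a $D^2$-bundle over $S^2$ with $b_2=1$ on each side, so the correction-term machinery you invoke will produce only $O(1)$ obstructions regardless of how large the extremal $d$-invariants of $L(n,1)$ grow. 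Your fallback of switching to an AGL family with unbounded $\varepsilon$ fails on the other end, since you would then need to establish that $n_2$ stays bounded along that family, and Proposition~\ref{embedding_n_1} gives strong control only for $L(n,1)$.

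Second, and more structurally, the conjecture concerns the \emph{spine} embedding numbers $\varepsilon_S$ and $\tilde\varepsilon_S$, which are sensitive to the trisection structure in a way $\varepsilon$ is not. The interesting scenario, and the one the author seems to be flagging, is that the spine restriction forces $n_1$ to grow even when smooth embeddings into a bounded $\#^m S^2\times S^2$ exist. Any proof routed entirely through $\varepsilon$ discards precisely the structure that makes the conjecture nontrivial, and the $L(19,18)$ example shows this route is blocked. A genuine attack on the conjecture would need a lower bound on $\varepsilon_S$ that actually uses the requirement that $M\cap Y_{ij}$ be handlebodies and $M\cap\mathring X_i$ be balls, which no currently known invariant provides.
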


If we go back to the more general case, we can also conjecture that any orientable 3--manifold can be embedded nicely into $\#^n S^2 \times S^2$ for large enough $n$.

\begin{conj}
If $M$ is an orientable 3--manifold, then $M$ embeds almost in the spine of a minimal genus trisection of $\#^n S^2 \times S^2$ for some positive integer $n$.
\end{conj}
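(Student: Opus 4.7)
The natural approach is to strengthen the construction of Theorem \ref{generalembeddingthm} by controlling the parity of the surface framings along the chosen path in $\mathcal{A}'$. Recall that in that proof, after the slide trick the Kirby diagram splits as a disjoint union of Hopf links, each with a $0$-framed component and another component whose framing equals the surface-induced framing of a curve in some intermediate cut system $\mathcal{D}_i$; this Hopf link contributes an $S^2 \times S^2$ summand exactly when that second framing is even, and an $S^2 \tilde \times S^2$ summand when it is odd. The entire 4-manifold will therefore be $\#^n S^2 \times S^2$ provided every curve in every intermediate $\mathcal{D}_i$ has even surface-induced framing. This is the direct higher-genus analog of the even Farey graph condition already used for lens spaces.

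To formalize this, fix the standard embedding of the genus $g$ Heegaard surface $S \hookrightarrow S^3$ coming from the first two cut systems of the construction. The surface framing mod $2$ of a simple closed curve on $S$ is then computed by a quadratic refinement $q \colon H_1(S; \Z/2) \to \Z/2$ of the mod-$2$ intersection form, i.e.\ by a spin structure on $S$. I would call a cut system \emph{even} with respect to $q$ if $q([C_i]) = 0$ for every component $C_i$, and let $\mathcal{A}'_{ev} \subseteq \mathcal{A}'$ be the subgraph spanned by vertices admitting even representatives. The plan is then to establish two claims: (i) any Heegaard splitting of $M$ can be represented by a pair of even cut systems, possibly after a small number of Heegaard stabilizations, and (ii) the subgraph $\mathcal{A}'_{ev}$ is connected. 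Granting both, the construction of Theorem \ref{generalembeddingthm} applied to an even path between the two vertices coming from $M$ produces a minimal genus trisection of a $\#^n S^2 \times S^2$ in which $M$ lies almost in the spine.

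Claim (i) looks tractable: standard meridian systems are automatically even with framing $0$, and a Heegaard stabilization introduces a new $q$-isotropic pair which preserves evenness, while slides within a handlebody act through the handlebody subgroup of the spin mapping class group and can be used to adjust a cut system of $M$ into an even one. The crucial point is that stabilization increases $g$ but leaves the final trisection minimal genus, since the resulting surface always has genus $2(d_{\mathcal{A}'}(S)-1)g = 2n$.

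The hard part will be Claim (ii). At genus $1$ connectivity follows from the even Farey graph lemma already established, but at higher genus a genuinely new argument is required. The most promising strategy is to mimic the Hatcher--Thurston proof implicit in the connectedness of $\mathcal{A}'$, replacing each pants-move by a sequence of moves that preserves $q$, using that the spin mapping class group is generated by Dehn twists along $q$-isotropic curves. A secondary strategy is induction on $g$, cutting along a separating $q$-isotropic curve to reduce to lower genus surfaces together with the genus-$1$ case. The main risk is that $\mathcal{A}'_{ev}$ may split into several components distinguished by an Arf-type spin invariant; in that case one would need either to restrict to cut systems in the appropriate component (and show both endpoints lie in it), or to allow one extra stabilization to bridge components while still yielding only $S^2 \times S^2$ summands.
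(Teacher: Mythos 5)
This statement is left as a \emph{conjecture} in the paper; there is no proof there to compare against. After stating it, the paper offers exactly two sentences of strategy: find a Heegaard diagram of $M$ in which, for the standard embedding of the Heegaard surface in $S^3$, every curve has even induced framing, and then find a path in $\mathcal{A}$ from standard meridians to that diagram passing only through such ``all-even'' cut systems, so that Theorem~\ref{generalembeddingthm} produces only $S^2 \times S^2$ summands. Your proposal is this same strategy, correctly re-expressed in the cleaner language of a quadratic refinement $q$ (a spin structure on $S$), with the observation that a cut system should be ``even'' meaning $q$ vanishes on each component, and with the right sanity checks (that $q$-values are additive along handleslides of disjoint curves, so ``even'' is a well-defined property of a vertex of $\mathcal{A}'$; that standard meridians are isotropic; and that a Heegaard stabilization preserves evenness). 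You are also correct that the two things that need to be established are (i) that $M$ admits an even cut system and (ii) that the even subgraph $\mathcal{A}'_{ev}$ is connected, and that (ii) is the genuine difficulty, being in genus one the Even Farey Graph Lemma that the paper proves and in higher genus unknown.

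However, this is not a proof, and you are explicit that it is not: claims (i) and (ii) are stated as things ``to establish,'' with several candidate strategies and a flagged risk (an Arf-type obstruction splitting $\mathcal{A}'_{ev}$ into components). Since the paper also does not resolve these points, there is no gap in the sense of a step the paper fills and you missed; rather, you have accurately located the same open problems the author would have to solve. One refinement worth noting: for the standard embedding $S \hookrightarrow S^3$, the induced $q$ has Arf invariant $0$ (it vanishes on the Lagrangian of standard meridians), so the Arf obstruction you worry about cannot separate the two endpoint vertices at the level of the global invariant of $q$; any obstruction to (ii) would have to be subtler than $\mathrm{Arf}(q)$ itself. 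Also be careful with your remark that stabilizations are harmless because the $n$ in the theorem scales with $g$: a stabilization changes both $g$ and potentially $d_{\mathcal{A}'}(S)$, and ``minimal genus trisection'' is a property of the resulting $\#^n S^2 \times S^2$, which the paper's construction does yield for any valid path, so that part is fine, but the resulting $n$ will generally grow.
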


This could possibly be proven with the same strategy we used for lens spaces.  First find a Heegaard diagram of $M$ such that when the Heegaard surface is embedded standardly into $S^3$, the induced framings on every curve of the diagram are all even.  Then find a path in $\mathcal{A}$ connecting a set of standard meridians with the diagram for $M$, such that every vertex of the path also represents a cut system with all even framings.  If this is done, the construction of Theorem \ref{generalembeddingthm} will give the desired embedding.

\section{embeddings in non minimal genus trisections}
Theorem \ref{generalembeddingthm} can be modified in a way that often results in a lower value of $n$ at the cost of the trisection of $\#^n S^2 \times S^2$ or $\#^n S^2 \tilde \times S^2$ not being minimal genus.  

\begin{thm}
Assume $M$ is a 3--manifold that is not $\#^k S^1 \times S^2$ with genus $g$ Heegaard surface $S$ (so in particular $d_{\mathcal{B}'}(S) > 1$).  Then $M$ lies almost in the spine of a trisection of $\#^n S^2 \times S^2$ or $\#^n S^2 \tilde \times S^2$ where $n = g(d_{\mathcal{B}'}(S)-1)$.
\end{thm}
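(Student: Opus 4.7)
The plan is to mimic the construction and Kirby-calculus simplification in the proof of Theorem~\ref{generalembeddingthm}, substituting a geodesic in $\mathcal{B}'$ for one in $\mathcal{A}'$. Invoke connectedness of $\mathcal{B}'$ to fix a sequence $\mathcal{D}_0, \mathcal{D}_1, \ldots, \mathcal{D}_m$ with $m = d_{\mathcal{B}'}(S)$ such that $\mathcal{D}_0$ and $\mathcal{D}_m$ determine the given Heegaard splitting of $M$. The key new feature is that each adjacent pair $\mathcal{D}_i, \mathcal{D}_{i+1}$ now gives a standard Heegaard diagram for $\#^{k_i} S^1 \times S^2$ for some $k_i \ge 0$: up to reindexing, $k_i$ of the curves are parallel between $\mathcal{D}_i$ and $\mathcal{D}_{i+1}$ and the remaining $g-k_i$ are dual. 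After slides and a diffeomorphism of $S$, normalize so that $\mathcal{D}_0$ is a standard meridian system and $\mathcal{D}_1$ is dual-or-parallel to it in standard form. Build the trisection surface exactly as in the earlier theorem: $2(m-1)$ tubed copies of $S$ carrying the same alternating red and green structural curves between adjacent copies, with the blue curves drawing $\mathcal{D}_2, \ldots, \mathcal{D}_m, \mathcal{D}_{m-1}, \ldots, \mathcal{D}_2, \mathcal{D}_1$ and reflections on the even-indexed copies.

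The verification that $M$ lies almost in the spine is local to the trisection diagram and transfers verbatim from Theorem~\ref{generalembeddingthm}, since the subhandlebody decomposition and ball-filling arguments are insensitive to whether adjacent cut systems are purely dual or only dual-or-parallel.

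The bulk of the new work is in the Kirby-calculus identification of the ambient 4--manifold. View the surface inside the $S^3$ determined by the red and green curves so the blue curves form a framed link, with framings induced by the surface embedding. Execute the peak elimination by sliding each inner $\mathcal{D}_{m-1}$ curve over its outer counterpart. Each pair dual to a $\mathcal{D}_m$ curve splits off a Hopf link linking that $\mathcal{D}_m$ curve exactly once, giving an $S^2 \times S^2$ or $S^2 \tilde \times S^2$ summand according to the parity of the surface framing, just as in Theorem~\ref{generalembeddingthm}. Each parallel pair instead yields a 0-framed unknot that can be arranged, via a judicious choice of band, to be unlinked from the corresponding parallel $\mathcal{D}_m$ curve; this unknot pairs with a 3--handle of the trisection to cancel, contributing no new connect summand. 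Iterating the peak reduction on $m$ collapses the diagram to one for a connect sum of at most $g(d_{\mathcal{B}'}(S)-1)$ copies of $S^2 \times S^2$ and $S^2 \tilde \times S^2$ in some combination. Using $(S^2 \times S^2)\#(S^2 \tilde \times S^2) \cong \#^2(S^2 \tilde \times S^2)$, consolidate mixed sums into a single summand type, and then stabilize in a region disjoint from $M$ by connect-summing with additional copies of the standard genus-2 trisection of that same type to reach exactly $n = g(d_{\mathcal{B}'}(S)-1)$ copies; this last step is what makes the trisection non-minimal, consistent with the framing of this section.

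The hard part is the Kirby calculus in the parallel case: one must verify rigorously that the 0-framed unknots produced from parallel-pair slides can be isolated from the rest of the link and paired cleanly with 3--handles of the trisection, without introducing spurious $S^2$-bundle summands. A safe but strictly weaker fallback is to refine each $\mathcal{B}'$-edge into a pair of $\mathcal{A}'$-edges (mimicking the connectedness proof for $\mathcal{A}$) and apply Theorem~\ref{generalembeddingthm} to the refined path, which yields only the bound $n \le g(2d_{\mathcal{B}'}(S)-1)$ and misses the intended sharpening.
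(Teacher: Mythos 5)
Your overall structure matches the paper's: keep the same diagram construction, replace the $\mathcal{A}'$-geodesic with a $\mathcal{B}'$-geodesic, and in the Kirby-calculus peak elimination treat dual pairs exactly as in Theorem~\ref{generalembeddingthm} while handling parallel pairs separately. The gap is exactly where you flagged it, but your proposed resolution of the parallel case is missing a step. You slide only the inner $\mathcal{D}_{m-1}$ curve over the outer one and get a single $0$-framed unknot, then declare the reduction done. But after that slide the parallel curve of $\mathcal{D}_m$ and the outer $\mathcal{D}_{m-1}$ curve both remain in the diagram, and they are two concentric parallel $(p,q)$-type curves, each carrying surface framing $f$ and also linking each other with linking number $f$. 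That pair is not split off from the rest of the diagram, and the remaining link no longer has the form $\mathcal{D}_1,\ldots,\mathcal{D}_{m-1},\mathcal{D}_{m-2},\ldots,\mathcal{D}_2$ required for the recursion, so ``iterating the peak reduction'' as stated does not go through.

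The paper resolves this by sliding \emph{both} the parallel $\mathcal{D}_m$ curve \emph{and} the inner $\mathcal{D}_{m-1}$ curve over the outer $\mathcal{D}_{m-1}$ curve. Each slide is a band-sum against a parallel concentric copy, so in each case the framing drops from $f$ to $f + f - 2f = 0$ and the resulting curve bounds an embedded disk disjoint from the remaining link; the two resulting $0$-framed split unknots are then removed (cancelling against $3$-handles of the trisection). Only after performing both slides does the peak curve $\mathcal{D}_m$ actually disappear and only the outer $\mathcal{D}_{m-1}$ curve survive to be the new peak, which is what allows the induction to continue. If you instead want to keep your one-slide-at-a-time bookkeeping, you must follow it with a second slide of the remaining $\mathcal{D}_m$ curve over the outer $\mathcal{D}_{m-1}$ and cancel the second $0$-framed unknot; as written your argument silently drops that pair. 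Your closing suggestion, padding by connect-summing standard genus-$2$ trisections to reach exactly $n$ summands, is not in the paper (the paper writes $n = g(d_{\mathcal{B}'}(S)-1)$ without commenting on whether fewer sphere-bundle summands ever occur), and in any case that step only addresses the count of summands, not the more basic issue that without the second slide the Kirby diagram is not reduced to the claimed form.

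Your fallback of refining each $\mathcal{B}'$-edge into two $\mathcal{A}'$-edges is safe but, as you note, loses the improvement this theorem is designed to capture, so it does not constitute a proof of the stated bound.
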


Note that $d_{\mathcal{A}'}$ in Theorem \ref{generalembeddingthm} has been replaced with $d_{\mathcal{B}'}$ in this result.

\begin{proof}
Proceed identically to the original theorem until the analysis of the Kirby diagram where we have a sequence $\mathcal{D}_1,\cdots,\mathcal{D}_m,\mathcal{D}_{m-1},\cdots,\mathcal{D}_2$ lying on parallel copies of $S$, and with adjacent copies of $S$ tubed together to form the trisection surface giving the framings of all the curves in the Kirby diagram.  Previously the curves of the diagrams could be slid such that $\mathcal{D}_m$ and $\mathcal{D}_{m - 1}$ were dual (giving a Heegaard diagram of $S^3$, but now some of the curves may be parallel instead of dual.  When curves are dual, we follow the same process as we did before.  We slide the curve from the inner $\mathcal{D}_{m - 1}$ over the same curve from the outer $\mathcal{D}_{m - 1}$ to get a 0 framed unknot linking a curve of $\mathcal{D}_m$ once.  We then use the 0 framed unknot to separate the pair away from the rest of the diagram as a Hopf link representing a copy of $S^2 \times S^2$ or $S^2 \tilde \times S^2$.

If instead we have a curve of $\mathcal{D}_{m-1}$ parallel to a curve of $\mathcal{D}_m$, we take this curve from $\mathcal{D}_m$ and the inner $\mathcal{D}_{m-1}$ and slide both over the same curve from the outer $\mathcal{D}_{m-1}$ to get two 0 framed unknots unlinked from the rest of the diagram.  These 0 framed unknots can be removed without affecting the 4--manifold represented by the Kirby diagram.  In this way we can separate out every curve from the inner $\mathcal{D}_{m-1}$ and from $\mathcal{D}_m$ as before, resulting in the simpler Kirby diagram represented by the sequence $\mathcal{D}_1,\cdots,\mathcal{D}_{m-1},\mathcal{D}_{m-2},\cdots,\mathcal{D}_2$ peaking at $\mathcal{D}_{m-1}$.
\end{proof}

\section{embedding number and spine embedding number}
Fix an orientable 3--manifold $M$.  The \emph{embedding number} $\varepsilon(M)$ of $M$ is the smallest value of $n$ such that $M$ embeds in $\#^n S^2 \times S^2$.  It is known that there is always some such embedding, so the embedding number is always a finite integer.  This leads us to the following definitions:
\begin{defn}
The \emph{spine embedding number} $\varepsilon_S(M)$ of $M$ is the smallest integer value $n$ such that $M$ embeds almost in the spine of a trisection of $\#^n S^2 \times S^2$.  We take $n$ to be $\infty$ if no such embedding exists for any $n$.
\end{defn}
\begin{defn}
The \emph{twisted spine embedding number} $\tilde \varepsilon_S(M)$ of $M$ is the smallest integer value $n$ such that $M$ embeds almost in the spine of a trisection of $\#^n S^2 \tilde \times S^2$.   We have shown that $\tilde \varepsilon_S(M) < \infty$ for every $M$.
\end{defn}

By calculating the distance from $0/1$ to $p/q$ in the even Farey diagram, we can show that
\begin{prop}
$\varepsilon_S(L(p,q)) \le p-1$
\end{prop}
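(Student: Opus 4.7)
The plan is to refine the construction of the preceding proposition. That construction turns any path $\mathcal D_0,\dots,\mathcal D_m$ from a standard longitude to $(p,q)$ in the even Farey graph into an embedding of $L(p,q)$ almost in the spine of a trisection of $\#^{m-1}S^2\times S^2$ (via Theorem~\ref{generalembeddingthm} with $g=1$; staying in the even Farey graph guarantees the untwisted summands). Hence it suffices to exhibit such a path of length at most $p$. As a preliminary reduction, use $L(p,q)\cong L(p,p-q)$: if $p,q$ are both odd, replace $q$ by $p-q$ so that $(p,q)$ itself lies in the even Farey graph. We may thus assume $\gcd(p,q)=1$ and $0<q<p$ with $pq$ even.

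The core combinatorial lemma I would prove is: \emph{for every even-Farey vertex $(p,q)$ with $0\le q<p$, the even Farey distance from $(0,1)$ to $(p,q)$ is at most $p$.} The proof would go by induction on $p$, with the base cases $p=0,1,2$ handled directly by the explicit path $(0,1)\to(1,0)\to(2,1)$. For the inductive step with $p\ge 3$, I would exploit the Stern--Brocot parent structure of the Farey graph: every such $(p,q)$ is the mediant of a unique pair of Farey-adjacent vertices $(a,b)$ and $(c,d)$ with $a+c=p$, $b+d=q$, $|ad-bc|=1$, and $a,c\in\{1,\dots,p-1\}$. The key parity observation is that at least one parent lies in the even Farey graph, for otherwise $a,b,c,d$ would all be odd and $ad-bc$ would be even, contradicting $|ad-bc|=1$. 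Picking such a parent $(a,b)$, the inductive hypothesis yields a path of length at most $a\le p-1$ from $(0,1)$ to $(a,b)$; appending the edge to $(p,q)$ closes the induction.

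The main obstacle is this lemma, and within it the parity argument isolating a Farey parent that survives restriction to the even subgraph. The only other detail that needs attention is confirming that both Farey parents have numerator in $\{1,\dots,p-1\}$; a parent with numerator $0$ must be $(0,1)$, which by Farey-adjacency forces $p=1$, and is absorbed into the base cases. Once the lemma is in hand, the bound $\varepsilon_S(L(p,q))\le p-1$ follows immediately by plugging the resulting path into the preceding proposition's construction.
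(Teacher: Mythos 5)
Your proof is correct and rests on the same underlying idea as the paper's: bound the even Farey distance from $(0,1)$ to $(p,q)$ by $p$, after first replacing $q$ by $p-q$ when both are odd, and then feed that path into the earlier construction to get an embedding almost in the spine of $\#^{p-1}S^2\times S^2$. Where you differ is in how the distance bound is established. The paper argues via an informal ``iteration count'' on the Farey diagram, claiming that a vertex's distance from $(0,1)$ in the even Farey graph equals the iteration at which it is added, and that $p/q$ appears by the $p$th iteration; the intermediate assertion that the $n$th iteration adds exactly the reduced fractions $n/m$ is actually not quite accurate (iteration $4$, for instance, introduces $5/2$ and $5/3$), though the final bound survives. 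Your inductive argument on the numerator, using the Stern--Brocot parent pair $(a,b),(c,d)$ with $a+c=p$ and the parity observation that $a,b,c,d$ cannot all be odd (since $|ad-bc|=1$), is sharper and makes the step ``append one edge to a path of length $\le a\le p-1$'' completely explicit. You also correctly isolate the two edge cases — a parent with numerator $0$ forces $p=1$, and the parent $(1,1)$ is excluded because it is not in the even Farey graph — which the paper glosses over. So this is the same route, but with the combinatorial core carried out more rigorously.
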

\begin{proof}
As when we originally found embeddings of $L(p,q)$ in the spines of connect sums of $\#^n S^2 \times S^2$s, we replace $q$ with $p-q$ if both $p$ and $q$ were odd. Construct the Farey diagram iteratively starting from the biggest triangles and adding smaller triangles in each iteration.  After the first iteration we have constructed the two largest triangles, and the set of vertices is $(1,1),(1,0),(0,1),(-1,1)$.  We focus on the region between $(1,0)$ and $(1,1)$.  In the second iteration, $2/1$ is added, which is distance $2$ from $0/1$ in both the Farey graph and even Farey graph.  In the third iteration, $3/1$ and $3/2$ are added.  Continuing in this manner, the $n$th iteration adds all fractions $n/m$ with $m < n$ that are in reduced form.  It is easy to calculate the distance from $(0,1)$ in the even Farey graph, since it is precisely the iteration in which the vertex was added.  Since $p/q$ will be added in the $p$th iteration at the latest, it is distance $\le p$ from $(0,1)$.  Using this path we can construct an embedding into $\#^n S^2 \times S^2$ with $n \le p-1$.
\end{proof}
Since $\varepsilon(M) \le \varepsilon_S(M)$, this leads to an alternative proof that $\varepsilon(L(p,q))$ is at most $p-1$ for any lens space, a fact proved previously in \cite{AcetoGollaLarson1}.  However, \cite{AcetoGollaLarson1} also gives constructions of cases where $\varepsilon(M)$ is less than the bound we find.  For example, it was also proved in  that $L(19,18))$ embeds in $\#^4 S^2 \times S^2$, but the distance in the even Farey graph only gives the bound $\varepsilon(L(19,18)) \le 18$.  It is unknown whether these embeddings can be realized as embeddings almost in the spine of a trisection.  However, it is unlikely that $\varepsilon(M)$ is always equal to $\varepsilon_S(M)$. 

\section{Further directions}

We conclude with some questions and conjectures.  The most obvious question is whether embeddings can always be realized as lying almost in the spine of a trisection.  More formally, we ask:

\begin{question}
If $M$ is a 3--manifold embedded in a smooth 4--manifold $X$, does there always exist a trisection of $X$ such that $M$ can be (topologically/piecewise linearly) isotoped to lie almost in the spine?
\end{question}

Recall that every 4--manifold can be embedded in a connect sum of $S^2 \times S^2$s, whereas our construction required using copies of $S^2 \tilde \times S^2$.  We ask if our trisection result can be improved to give a trisection theory version of the more standard result.

\begin{question}
Can every 3--manifold $M$ be embedded to lie almost in the spine of a trisection of some $\#^k S^2 \times S^2$?
\end{question}

We moreover ask
\begin{question}
Are the trisections we construct always standard?
\end{question}

If the answer is no, then our trisections may provide a large class of nonstandard trisections of connect sums of sphere bundles.  As of the writing of this paper, it has beens shown that there exist manifolds with non-isotopic trisections of the same genus \cite{Islambouli1}, but all such examples have nontrivial fundamental group.  It would therefore be interesting to find examples of non-isotopic trisections of the same genus for simply connected 4--manifolds.

An embedding almost in the spine of a trisection gives a decomposition of the 3--manifold into three collections of handlebodies (and some balls which can be collapsed).  We can observe that in all of the examples we have constructed these are trisections of the 3--manifold as per \cite{Koenig2}.  It would be interesting to know when a trisection of a 3--manifold can be realized by an embedding almost in the spine of a 4--manifold.

Lastly, it would be interesting to see nontrivial examples of 3--manifolds lying almost in the spine of a trisection of $S^4$.  Finding classes of such examples would give a different perspective as to why such 3--manifolds do embed into $S^4$.
\bibliographystyle{amsplain}
\bibliography{Trisections}{}

%

%

\end{document}